\newtheorem{theorem}{Theorem}[section]
\newtheorem{proposition}[theorem]{Proposition}
\newtheorem{corollary}[theorem]{Corollary}
\newtheorem{definition}[theorem]{Definition}
\newtheorem{example}[theorem]{Example}
\newtheorem{problem}{Problem}
\newcommand{\pref}[1]{Proposition~\textup{\ref{prop:#1}}}
\newcommand{\cref}[1]{Corollary~\textup{\ref{cor:#1}}}
\newcommand{\G}{\Gamma}
\newcommand{\calM}{\mathcal M}
\newcommand{\calP}{\mathcal P}
\newcommand{\calQ}{\mathcal Q}
\newcommand{\calR}{\mathcal R}
\newcommand{\calU}{\mathcal U}
\newcommand{\comment}[1]{}
\newcommand{\eps}{\epsilon}
\newcommand{\mix}{\diamond}
\newcommand{\covers}{\searrow}     
\newcommand{\fgraph}{\mathcal G}
\DeclareMathOperator{\Mon}{Mon}
\DeclareMathOperator{\Stab}{Stab}
\DeclareMathOperator{\Norm}{Norm}
\DeclareMathOperator{\rank}{rank}
\DeclareMathOperator{\Sym}{Sym}
\DeclareMathOperator{\Core}{Core}
\begin{document}

\title{Open problems on k-orbit polytopes}

\author{Gabe Cunningham \\
University of Massachusetts Boston \\
and \\ 
Daniel Pellicer \\
Centro de Ciencias Matem\'aticas\\
Universidad Nacional Aut\'onoma de M\'exico
}

\date{ \today }
\maketitle

\begin{abstract}
	We present 35 open problems on combinatorial, geometric and algebraic aspects of $k$-orbit abstract polytopes. We also present a theory of rooted polytopes that has appeared implicitly in previous work but has not been formalized before.
	
\vskip.1in
\medskip
\noindent
Key Words: abstract polytope, rooted polytope, k-orbit polytope

\medskip
\noindent
AMS Subject Classification (2010):  52B15, 51M20, 52B05, 20B25 
\comment{
	20B25: Finite automorphism groups of algebraic, geometric, or combinatorial structures.
	51M20: Polyhedra and polytopes; regular figures, division of spaces
	52B05: Combinatorial properties of polyhedra (number of faces etc.)
	52B15: Symmetry properties of polytopes
}

\end{abstract}

\section{Introduction}
	Abstract polytopes generalize convex polytopes, skeletal polyhedra, and tilings of surfaces
	and spaces. A regular (abstract) polytope is one that ``looks the same from every angle'': the
	automorphism group acts transitively on the flags. More generally, a $k$-orbit polytope 
	is one where the automorphism group has $k$ orbits on the flags. Regular polytopes
	have been extensively studied; see \cite{arp} for the standard reference, and see
	\cite{transitive-subgps, smallest-regular, realizations-4, modular-reduction} for a broad 
	cross-section of current advances. Two-orbit polytopes (including chiral
	polytopes) have also received a lot of attention; see \cite{two-orbit,chiral} for the basic notions and 
	\cite{finite-chiral-4-poly, chiral-constructions, chiral-ext2, chiral-atlas2, two-orbit-convex, chiral-problems}
	for recent work. Very little is yet known about $k$-orbit polytopes for $k \geq 3$.

	Most of what we have learned about regular and chiral polytopes
	has come from algebraic arguments, because there is a standard way of building a regular or
	chiral polytope from groups of a certain kind. Unfortunately, it is difficult to generalize
	these algebraic arguments to $k$-orbit polytopes with $k \geq 3$. It seems likely that
	arguments of a different flavor will be required.
	
	Our goal in this paper is to collect what is already known about $k$-orbit polytopes,
	and to pose what we think will be the important problems as research moves forward.

	We also take this opportunity to describe a theory of \emph{rooted polytopes}.
	When working with a regular polytope $\calP$, we usually pick an arbitrary flag $\Phi$ to be
	a \emph{base flag}, and we name the automorphisms of $\calP$ according to how they
	act on $\Phi$. In this case, the choice is merely a formality, since all of the flags
	look alike. With chiral polytopes, the choice of base flag actually matters. 
	Every chiral polytope $\calP$ has an \emph{enantiomorphic form} $\overline{\calP}$,
	which we think of as being the mirror image of $\calP$. We often speak 
	as if $\calP$ and its enantiomorphic form are different (but isomorphic) polytopes. In reality, 
	the only difference between $\calP$ and $\overline{\calP}$ is the choice of base flag;
	the underlying polytope (as a poset) is exactly the same. We hope to clarify
	this confusion --- and facilitate working with $k$-orbit polytopes --- by carrying the
	chosen base flag along as part of the notation. Thus, a rooted polytope is a pair
	$(\calP, \Phi)$, where $\Phi$ is a flag of $\calP$.

	We will see that many common polytope operations are better understood as operations on rooted
	polytopes. Covers, mixing, amalgamation, presentations for automorphism groups, and much more
	all depend on the choice of base flag. In fact, many of these operations are not well-defined
	(for non-regular polytopes) if we try to remove the reliance on a base flag.

\section{Background}
\subsection{Polytopes}
\label{sec:polytopes}

	Here we present background material on polytopes, mostly taken from \cite[Sec. 2A]{arp}.
	Let $\calP$ be a partially-ordered set with a strictly monotone rank function with outputs
	in $\{-1, 0, \ldots, n\}$. The elements of $\calP$ are called \emph{faces}, and an element
	of rank $i$ is an \emph{$i$-face}. \emph{Vertices}, \emph{edges}, and \emph{facets} are
	faces of rank 0, 1, and $n-1$, respectively. The maximal
	chains of $\calP$ are called \emph{flags}. We say that $\calP$ is a \emph{polytope of rank $n$}
	(or \emph{$n$-polytope}) provided it satisfies the following four conditions:
	\begin{enumerate}
	\item There is a unique minimal face $F_{-1}$ of rank $-1$, and a unique maximal face $F_n$ of rank $n$.
	\item Every flag contains $n+2$ faces (one in each rank).
	\item (Diamond condition): Whenever $F < G$ and $\rank G - \rank F = 2$, there are
	exactly two faces $H$ with $\rank H = \rank F + 1$ such that $F < H < G$.
	\item (Strong connectivity): Suppose $F < G$ and $\rank G - \rank F \geq 3$. If $F < H < G$
	and $F < H' < G$, then there is a chain
	\[ H = H_0 \leq H_1 \geq H_2 \leq H_3 \geq H_4 \leq \cdots \geq H_k = H' \]
	such that $F < H_i < G$ for each $i$.
	\end{enumerate}

	There is a unique polytope in each of the ranks $-1, 0,$ and $1$. In rank 2, each polytope has the
	same number of vertices and edges; we denote a 2-polytope by $\{p\}$, where $2 \leq p \leq \infty$
	is the number of vertices. The digon $\{2\}$ consists of two vertices and two edges, each containing both vertices. If $p$ is finite and at least 3, then $\{p\}$ is the face-lattice of a convex
	$p$-gon. The apeirogon $\{\infty\}$ is the face-lattice of the tiling of a line by line segments. Each \emph{polyhedron} (3-polytope) can be represented as a map on a surface (though not every
	map corresponds to a polytope). In ranks 4 and higher, the possibilities multiply dramatically.

	Like their convex counterparts, polytopes are built out of polytopes of lower rank. If $F < G$ are
	faces of a polytope $\calP$, then the \emph{section} $G/F$ consists of those faces $H$ such that
	$F \leq H \leq G$. Every section of a polytope is itself a polytope, whose rank is
	$\rank G - \rank F - 1$.
	When talking about a facet $F$ of a polytope, we usually have in mind the section $F/F_{-1}$ of $F$
	over the minimal face of $\calP$. We also often speak of the \emph{vertex-figure} of a polytope,
	which is a section $F_n / v$, where $F_n$ is the maximal face of $\calP$ and $v$ is a vertex.

	The \emph{trivial extension} of an $n$-polytope $\calP$ is an $(n+1)$-polytope defined
	as follows. We add two new faces to $\calP$; first we add $F_n'$ and give it the same
	incidences as $F_n$, and then we add a new maximal face $F_{n+1}$. That the result is
	always a polytope is easily checked. For example, if we extend the triangle $\{3\}$
	in this way, then we get two triangles ``glued back-to-back'', sharing the same
	vertices and edges; this gives us the polytope $\{3, 2\}$.
	
	The \emph{dual} of $\calP$, denoted $\calP^{*}$, has the same underlying set of $\calP$ but
	with the partial order reversed. In particular, the facets of $\calP^{*}$ are the
	vertices of $\calP$, and vice-versa.
	
	Two flags are \emph{adjacent} if they differ in only a single element. Flags that differ only
	in their $i$-face are said to be \emph{$i$-adjacent}.
	As a consequence of the diamond condition, every flag $\Phi$ has a unique $i$-adjacent
	flag $\Phi^i$. We extend this notation and for any word $w$ on the set $\{0,\dots,n-1\}$ we express $(\Phi^w)^i$ by $\Phi^{wi}$.
	
	Polytopes are also \emph{strongly flag-connected}: Given any two flags $\Phi$
	and $\Psi$, there is a sequence of flags
	\[ \Phi = \Phi_0, \Phi_1, \ldots, \Phi_k = \Psi \]
	such that $\Phi_i$ is adjacent to $\Phi_{i+1}$ for each $i$, and each $\Phi_i$ contains
	$\Phi \cap \Psi$.

	The \emph{flag graph} $\fgraph_{\calP}$ of an $n$-polytope $\calP$ is a simple, $n$-regular graph,
	with vertices corresponding to the flags of $\calP$, and with an edge labeled $i$ between two vertices
	whenever the corresponding flags are $i$-adjacent. The flag graph captures all of the information
	of a polytope; each $j$-face corresponds to a connected component of the graph obtained by
	deleting all edges labeled $j$, and two faces are incident if their corresponding components
	intersect. 
	
	\emph{Maniplexes} are generalizations of polytopes that are essentially connected graphs resembling
	flag graphs (see \cite{maniplexes}). In a maniplex, each flag has a well-defined $i$-adjacent flag for each $i$, but the diamond condition may still fail. Strong connectivity is also not required for maniplexes. Their main feature is that if $i,j \in \{0, \dots, n-1\}$ and $|i-j| \ge 2$ then every node of the flag graph (maniplex) belongs to an alternating square with edges labelled $i$ and $j$. Every $n$-polytope can be viewed as an $(n-1)$-maniplex. 
	
	Many operations 
	on polytopes produce structures which may not be polytopes, and so it is useful to have a broader context
	to work in. The reader who is used to working with \emph{pre-polytopes} (posets that satisfy the first
	three conditions for a polytope but not necessarily the fourth) can mentally change every
	`maniplex' to `pre-polytope' with little loss of accuracy. Strictly speaking, whenever we
	work with maniplexes and polytopes together, we should convert the polytope to its flag
	graph, but we will not bother with that formality.
	
	A maniplex is \emph{polytopal} if it is (the flag graph of) a polytope. We describe
	here one characterization of polytopality that will be helpful later, taken from \cite{poly-mani}.
	Let $\calM$ be an $n$-maniplex. The $i$-faces of $\calM$ are defined to be the connected
	components of $\calM_i$, the subgraph of $\calM$ obtained by deleting all $i$-edges.
	If $F$ is an $i$-face and $G$ is a $j$-face, then we say that $F < G$ if $i < j$ and $F \cap G \neq \emptyset$.
	A chain of $\calM$ is a sequence $F_1 < \cdots < F_k$. We say that $\calM$ has the
	\emph{component intersection property} (CIP) if, for every chain $F_1 < \cdots < F_k$ of $\calM$,
	the subgraph $\bigcap_{i=1}^k F_i$ is connected. By \cite[Thm. 4.5]{poly-mani}, a maniplex is
	polytopal if and only if it has the CIP.	
	
	A \emph{rooted $n$-polytope} is a pair $(\calP, \Phi)$, where $\calP$ is an $n$-polytope
	and $\Phi$ is a flag of $\calP$. Rooted maniplexes are defined analogously. We will refer to the polytope
	part of a rooted polytope as the \emph{underlying polytope}. We will sometimes refer to an underlying
	polytope even if, strictly speaking, the object we are working with might be a non-polytopal
	rooted maniplex.
	
\subsection{Automorphisms and $k$-orbit polytopes}	
		
	An \emph{isomorphism} of $n$-polytopes is a bijection that preserves rank and the partial order.
	The isomorphisms from $\calP$ to itself form the \emph{automorphism group} of $\calP$,
	denoted $\G(\calP)$. For rooted polytopes, we say that $(\calP, \Phi)$ is isomorphic to 
	$(\calQ, \Psi)$ if there is an isomorphism of polytopes $\varphi: \calP \to \calQ$ that sends $\Phi$ to $\Psi$.
	
	The group $\G(\calP)$ acts freely (semiregularly) on the flags of $\calP$. A \emph{$k$-orbit polytope}
	is one where $\G(\calP)$ has $k$ orbits on the flags. The one-orbit polytopes are called \emph{regular},
	and the two-orbit polytopes such that adjacent flags always lie in different orbits are called \emph{chiral}.
	A one-orbit maniplex is called \emph{reflexible}.
	
	Taking the quotient of $\fgraph_{\calP}$ by $\G(\calP)$ yields the \emph{symmetry type graph}
	$T(\calP)$ of $\calP$ \cite{stg}. In other words, $T(\calP)$ has one vertex for each
	flag orbit of $\calP$, and two vertices are connected by an edge labeled $i$ if, whenever
	$\Phi$ is in the orbit corresponding to one of the vertices, then $\Phi^i$ is in the orbit
	corresponding to the other vertex. If $\Phi$ and $\Phi^i$ are in the same orbit, then
	$T(\calP)$ has a semi-edge labeled $i$ at the corresponding vertex.
	Thus, if $\calP$ is a $k$-orbit $n$-polytope, then $T(\calP)$ is an $n$-regular graph on $k$ vertices.
	
	It is sometimes convenient to take the quotient of $\fgraph_{\calP}$ by a subgroup of
	$H$ of $\G(\calP)$, to get a representation of the flag orbits under the action of $H$.
	For example, if $\calP$ is a convex polytope (or an abstract polytope that is realized
	in euclidean space), then we can take the quotient of $\fgraph_{\calP}$ by the group
	$G(\calP)$ of geometric symmetries of $\calP$ to get the geometric symmetry type graph
	of $\calP$. 
	
	The automorphism group of a regular $n$-polytope $\calP$ has a standard form. If we fix a base flag
	$\Phi$, then for each $i \in \{0, \ldots, n-1\}$, there is an automorphism $\rho_i$ that
	sends $\Phi$ to $\Phi^i$. These automorphisms generate $\G(\calP)$ and satisfy (at least)
	the relations
	\begin{equation}
	\label{eq:involutions}
	\rho_i^2 = \eps \textrm{ (for all $i$)},
	\end{equation}
	\begin{equation}
	\label{eq:string}
	(\rho_i \rho_j)^2 = \eps \textrm{ (whenever $|i-j| > 1$)}.
	\end{equation}
	The automorphism group of a regular polytope also satisfies the following \emph{intersection condition}
	for all subsets $I$ and $J$ of $\{0, \ldots, n-1\}$:
	\begin{equation}
	\label{eq:int-cond}
	\langle \rho_i \mid i \in I \rangle \cap \langle \rho_i \mid i \in J \rangle = \langle \rho_i \mid i \in I \cap J \rangle.
	\end{equation}
	
	Any group $\G = \langle \rho_0, \ldots, \rho_{n-1} \rangle$ 
	that satisfies Equations~\ref{eq:involutions}, \ref{eq:string}, and \ref{eq:int-cond} 
	(with respect to the distinguished set of generators) is called a \emph{string C-group}. 
	More generally, a group that satisfies Equations~\ref{eq:involutions} and \ref{eq:string}
	is called a string group generated by involutions (sometimes abbreviated
	\emph{sggi}). Given a string C-group $\G$,
	we can build a regular polytope $\calP$ as a coset geometry of $\G$ such that
	$\G(\calP)$ is isomorphic to $\G$ (not just as abstract groups, but with the same
	generating set and relations).

	The automorphism groups of chiral polytopes satisfy equations that are analogous to 
	Equations~\ref{eq:involutions}, \ref{eq:string}, and \ref{eq:int-cond} \cite{chiral}. 
	As with regular polytopes, it is possible to build a chiral polytope as a coset geometry of such a group.
	To what extent can we do the same thing for
	general $k$-orbit polytopes? Using the symmetry type graph of $\calP$, it is possible
	to describe a standard generating set for $\G(\calP)$ (see \cite[Thm. 5.2]{stg}). Is
	it always possible to recover the structure of $\calP$ from such a group?
	
	\begin{problem}
	Given a distinguished generating set for the automorphism group of a $k$-orbit polytope, what analogue of the intersection
	condition holds?
	\end{problem}
	
	\begin{problem}
	\label{prop:coset-geo}
	Describe a way to build a general polytope as a coset geometry of its automorphism group,
	given a distinguished set of generators.
	\end{problem}
	
	See \cite{two-orbit} for work on these problems in the context of two-orbit polyhedra.

\subsection{The flag action}	
	
	Let 
	\[ W = [\infty, \ldots, \infty] = \langle r_0, \ldots, r_{n-1} \rangle, \]
	the universal Coxeter group with defining relations $r_i^2 = \eps$ for each $i$ and $(r_i r_j)^2 = \eps$ 
	whenever $|i - j| \geq 2$. There is a natural action of $W$ on the flags on any polytope,
	given by $r_i \Phi = \Phi^i$. Several important properties of this action can be found
	in \cite[Thm. 4.14]{mixing-and-monodromy}; we reiterate a few of them here.

	\begin{proposition}
	\label{prop:flag-action-properties}
	Let $\calP$ be an $n$-polytope, and let $\Phi$ be any flag of $\calP$.
	\begin{enumerate}
	\item The number of flags of $\calP$ is equal to $[W: \Stab_W(\Phi)]$.
	\item The number of flag orbits of $\calP$ is equal to $[W: \Norm_W(\Stab_W(\Phi))]$.
	\item $\G(\calP) \cong \Norm_W(\Stab_W(\Phi)) / \Stab_W(\Phi)$.
	\item Two flags $\Phi$ and $\Psi$ are in the same flag orbit if and only if $\Stab_W(\Phi) =
		\Stab_W(\Psi)$.
	\end{enumerate}
	\end{proposition}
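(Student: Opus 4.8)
The plan is to reduce everything to the single transitive action of $W$ on the flag set and to recover $\G(\calP)$ as the group of bijections commuting with that action. Write $\Omega$ for the set of flags of $\calP$ and set $S = \Stab_W(\Phi)$. Since $\calP$ is strongly flag-connected, every flag is reached from $\Phi$ by a sequence of adjacencies, so $W$ acts transitively on $\Omega$. Part (a) is then the orbit--stabilizer theorem, and it furnishes a $W$-equivariant identification $\Omega \cong W/S$ under which $w\Phi$ corresponds to the coset $wS$ (and $\Phi$ to $S$), with $W$ acting on $W/S$ by left multiplication.

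The heart of the matter is part (c). First I would observe that every $\varphi \in \G(\calP)$ commutes with the flag action: because $\varphi$ preserves rank and incidence, it sends $i$-adjacent flags to $i$-adjacent flags, so $\varphi(\Psi^i) = (\varphi\Psi)^i$ and hence $\varphi(w\Psi) = w(\varphi\Psi)$ for all $w \in W$. Conversely, a bijection of $\Omega$ that preserves every $i$-adjacency is a color-preserving automorphism of $\fgraph_{\calP}$; and since the $j$-faces of $\calP$ are exactly the connected components of $\fgraph_{\calP}$ with the $j$-edges deleted, with incidence given by nonempty intersection of components, such a bijection descends to a rank- and order-preserving bijection of $\calP$, i.e.\ to an element of $\G(\calP)$. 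Thus $\G(\calP)$ is precisely the group of $W$-equivariant permutations of $\Omega = W/S$. A direct computation then shows that a $W$-equivariant permutation must have the form $wS \mapsto wnS$, that this is well defined exactly when $n \in \Norm_W(S)$, and that $n$ and $n'$ induce the same permutation iff $nS = n'S$; assembling these facts (and checking compatibility with composition) gives the isomorphism $\G(\calP) \cong \Norm_W(S)/S$ of part (c).

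Parts (b) and (d) then drop out of this right-multiplication description, with no finiteness hypothesis. Writing $N = \Norm_W(S)$, the group $\G(\calP) \cong N/S$ acts on $W/S$ by right translation, so the orbit of $wS$ is $\{wnS : n \in N\}$, whose union is the left coset $wN$; hence two flags lie in the same $\G(\calP)$-orbit iff the corresponding cosets lie in the same left coset of $N$, and the flag orbits are in bijection with $W/N$. This is part (b). For part (d), write $\Psi = w\Phi$; then $\Stab_W(\Psi) = wSw^{-1}$, which equals $S$ exactly when $w \in N$, and by the orbit description this is precisely the condition that $\Phi$ and $\Psi$ lie in the same flag orbit.

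I expect the only genuinely non-formal step to be the converse half of part (c): showing that a permutation of the flags commuting with the $W$-action actually arises from a poset automorphism. This is where the combinatorial structure of $\calP$ enters, through the reconstruction of its faces and incidences from the labeled flag graph; everything else is coset bookkeeping built on the transitive $W$-action.
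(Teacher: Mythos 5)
Your argument is correct, and it is the standard one: the paper itself gives no proof of this proposition, citing it from \cite[Thm.~4.14]{mixing-and-monodromy}, where essentially the same reasoning appears (transitivity of the flag action via flag-connectivity, orbit--stabilizer, and the identification of $\G(\calP)$ with the $W$-equivariant permutations of $W/\Stab_W(\Phi)$, which form $\Norm_W(S)/S$ acting by right translation). The only points worth writing out carefully are the two you already flag: the converse half of (c), which rests on reconstructing the poset from the labeled flag graph, and the composition check, where right translation gives an anti-homomorphism so one composes with inversion (or uses the opposite group) to land the isomorphism in (c).
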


	\pref{flag-action-properties}(d) implies that $(\calP, \Phi) \cong (\calQ, \Psi)$ if and only if 
	$\Stab_W(\Phi) = \Stab_W(\Psi)$. This makes it convenient to identify a rooted polytope $(\calP, \Phi)$ 
	with the subgroup $\Stab_W(\Phi)$. In fact, it is possible to reconstruct $\calP$ from $\Stab_W(\Phi)$. Let 
	${\mathcal U} = \{\infty, \ldots, \infty\}$ be the {\em universal} $n$-polytope, with $\G(\calU) = W$.
	Given a subgroup $S$ of $W$, the quotient ${\mathcal U}/S$ is a flag-connected
	poset of rank $n$ (see \cite[Prop. 2D3]{arp}). Equivalently, we can build a poset from $W/S$ using
	double cosets, as in \cite{all-polytopes-are-quotients}. In any case,
	if $S = \Stab_W(\Phi)$, then $\calU / S \cong \calP$. Furthermore, we may identify
	flags in $\calU / S$ with cosets of $S$ in $W$, and $(\calU/S, S) \cong (\calP, \Phi)$.
	
	\begin{definition}
	Let $(\calP, \Phi)$ be a rooted polytope, with $S = \Stab_W(\Phi)$.
	The \emph{canonical representation} of $(\calP, \Phi)$ is the rooted polytope
	\[ \calU(S) := (\calU/S, S). \]
	\end{definition}

	A $k$-orbit polytope $\calP$ has $k$ distinct flag-stabilizers under the action of $W$.
	The flags in any given orbit all have the same flag-stabilizer, and so
	the vertices of the symmetry type graph $T(\calP)$ can be identified with these
	stabilizers. In this context, two vertices of $T(\calP)$ are connected by an 
	$i$-edge if and only if the corresponding stabilizers are conjugate by $r_i$.
	In other words, $T(\calP)$ simply represents the action by conjugation of $W$ on the flag-stabilizers,
	and this can be analyzed purely abstractly as a permutation group. For example, this
	is essentially the tactic used in \cite{k-orbit-maps}.
	
	Each flag-stabilizer has an associated normalizer, and distinct flag-stabilizers may have distinct normalizers. The universal
	group $W$ permutes the normalizers in just the same way as it permutes the stabilizers.
	Using this fact, along with the fact that $\G(\calP) \cong \Norm_W(\Stab_W(\Phi)) / \Stab_W(\Phi)$,
	it is possible to find a small generating set for $\G(\calP)$ by using $T(\calP)$;
	see \cite[Theorem 5.2]{stg}. Essentially, any closed walk in $T(\calP)$ corresponds to an element
	of the normalizer of the starting orbit, and so finding generators for the normalizer
	amounts to finding walks which generate all closed walks at a given vertex.

	\begin{example}
	Suppose $\calP$ is a triangular prism. Then the symmetry type graph of $\calP$ (with
	semi-edges suppressed) is:
	\[ \xymatrix @*[o] @*=<0.4em> {
	\circ \ar@{-}[r]^{1} & \circ \ar@{-}[r]^{2} & \circ
	}. \]
	Consider flags $\Phi_1, \Phi_2,$ and $\Phi_3$, one for each orbit (from left to right). 
	Then:
	\begin{align*}
	\G(\calP, \Phi_1) &= \langle \alpha_0, \, \alpha_2, \, \alpha_{1,0,1}, \, \alpha_{1,2,1,2,1} \rangle, \\
	\G(\calP, \Phi_2) &= \langle \alpha_0, \, \alpha_{1,0,1}, \, \alpha_{1,2,1}, \, \alpha_{2,1,2} \rangle, \\
	\G(\calP, \Phi_3) &= \langle \alpha_0, \, \alpha_1, \, \alpha_{2,1,0,1,2}, \, \alpha_{2,1,2,1,2} \rangle,
	\end{align*}
	where $\alpha_{i_1, \ldots, i_m}$ is the automorphism that sends the base flag $\Phi_j$
	to $\Phi_j^{i_1 \cdots i_m}$. We see that different choices of base flag yield 
	different generators for $\G(\calP)$.
	\end{example}
	
	The action of $W$ on the flags of $\calP$ gives rise to the \emph{monodromy group} of $\calP$
	(also called the \emph{connection group} in \cite{parallel-product} and elsewhere),
	which describes a regular cover of $\calP$ (see Section~3.1).
	Formally, the flag action is a homomorphism $\pi: W \to \Sym({\mathscr F}(\calP))$,
	where ${\mathscr F}(\calP)$ is the set of flags of $\calP$, 
	and the monodromy group of $\calP$ (denoted $\Mon(\calP)$) is the image of $\pi$. Equivalently,
	\[ \Mon(\calP) \cong W / \ker \pi = W / \Core_W(S), \]
	where $S$ is the stabilizer of an arbitrary flag of $\calP$, and $\Core_W(S)$ is the intersection
	of all conjugates of $S$ \cite[Lem. 4.12]{mixing-and-monodromy}.
	Describing the monodromy group of several families of polytopes is an active area of research;
	see \cite{monodromy-pyramid, monodromy-truncated-simplex, prism-covers, archimedean2, archimedean1}.

\section{Coverings and mixing}
\subsection{Coverings}

	The most commonly used functions between polytopes are \emph{coverings}:
	surjective functions that preserve rank, incidence, and flag adjacency. Formally,
	a surjective function $\varphi: \calP \to \calQ$ is a covering if:
		\begin{enumerate}
		\item $F \leq_{\calP} G$ implies $F \varphi \leq_{\calQ} G \varphi$,
		\item $\rank F \varphi = \rank F$ for all faces $F$, and
		\item $\Phi^i \varphi = (\Phi \varphi)^i$ for all flags $\Phi$ and all $i \in \{0, \ldots, n-1\}$.
		\end{enumerate}
	If there is a covering $\varphi: \calP \to \calQ$, then
	we say that $\calP$ \emph{covers} $\calQ$ (and write $\calP \covers \calQ$).
	If there is a covering from $\calP$ to $\calQ$ that sends $\Phi$ to $\Psi$, then
	we say that $(\calP, \Phi)$ covers $(\calQ, \Psi)$ (and write $(\calP, \Phi)
	\covers (\calQ, \Psi)$). If $(\calP, \Phi) \covers (\calQ, \Psi)$, then
	$(\calP, \Phi^i) \covers (\calQ, \Psi^i)$ for every $i \in \{0, \ldots, n-1\}$.

	Coverings can also be defined in terms of flag graphs or flag stabilizers:
	
	\begin{proposition}
	If $(\calP, \Phi)$ and $(\calQ, \Psi)$ are rooted $n$-polytopes, then the following
	are equivalent:
	\begin{enumerate}
	\item $(\calP, \Phi) \covers (\calQ, \Psi)$.
	\item There is a surjective color-preserving graph homomorphism from $\fgraph_{\calP}$ to
	$\fgraph_{\calQ}$ that sends $\Phi$ to $\Psi$.
	\item $\Stab_W(\Phi) \leq \Stab_W(\Psi)$.
	\end{enumerate}
	\end{proposition}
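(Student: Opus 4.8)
The plan is to establish the cycle of implications (a) $\Rightarrow$ (b) $\Rightarrow$ (c) $\Rightarrow$ (a), translating back and forth between the poset picture and the flag-graph picture via the dictionary recalled in Section~\ref{sec:polytopes}, and using the canonical representation for the final step. For (a) $\Rightarrow$ (b), I would start from a covering $\varphi\colon\calP\to\calQ$ with $\varphi(\Phi)=\Psi$ and check that it induces a vertex map $\varphi_*\colon\fgraph_\calP\to\fgraph_\calQ$: since $\varphi$ preserves order and rank (conditions (i) and (ii)), the image of a flag (a maximal chain with one face in each rank) is again such a chain, hence a flag. Condition (iii), namely $\Phi^i\varphi=(\Phi\varphi)^i$, says precisely that $\varphi_*$ preserves $i$-adjacency for every $i$; as $i$-adjacent flags are always distinct, no edge is collapsed, so $\varphi_*$ is a color-preserving graph homomorphism with $\varphi_*(\Phi)=\Psi$. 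The one point needing care is surjectivity: because $\varphi_*(\Theta^i)=\varphi_*(\Theta)^i$, the image of $\varphi_*$ is closed under passing to $i$-adjacent flags for each $i$, hence is a union of connected components of $\fgraph_\calQ$; since $\fgraph_\calQ$ is connected and the image is nonempty, $\varphi_*$ is onto.

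For (b) $\Rightarrow$ (c), let $f\colon\fgraph_\calP\to\fgraph_\calQ$ be a color-preserving homomorphism with $f(\Phi)=\Psi$. Because $W$ acts on flags by $r_i\Theta=\Theta^i$, the hypothesis $f(\Theta^i)=f(\Theta)^i$ says that $f$ commutes with each generator $r_i$, so a routine induction on word length gives $W$-equivariance, $f(w\Theta)=w\,f(\Theta)$ for all $w\in W$. Then for $w\in\Stab_W(\Phi)$ we get $\Psi=f(\Phi)=f(w\Phi)=w\,f(\Phi)=w\Psi$, so $w\in\Stab_W(\Psi)$, whence $\Stab_W(\Phi)\le\Stab_W(\Psi)$.

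For (c) $\Rightarrow$ (a), set $S=\Stab_W(\Phi)$ and $T=\Stab_W(\Psi)$ with $S\le T$. By the canonical representation, $(\calP,\Phi)\cong\calU(S)=(\calU/S,S)$ and $(\calQ,\Psi)\cong\calU(T)=(\calU/T,T)$, and in each the flags are identified with the cosets of $S$, respectively $T$, in $W$. The coset projection $wS\mapsto wT$ is well-defined exactly because $S\le T$, is surjective, sends the base coset $S$ to the base coset $T$, and is $W$-equivariant, so it preserves every $i$-adjacency. Reading this surjective color-preserving flag-graph homomorphism back through the dictionary --- a color-preserving map sends $j$-edges to $j$-edges, hence connected components of $(\,\cdot\,)_j$ to connected components, i.e.\ $j$-faces to $j$-faces, and it carries intersecting components to intersecting components --- yields a covering $\calU/S\to\calU/T$ of rooted polytopes, and transporting along the two isomorphisms gives $(\calP,\Phi)\covers(\calQ,\Psi)$.

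I expect the main obstacle to be precisely this last translation: one must be sure that a surjective color-preserving homomorphism of flag graphs really does descend to a well-defined, rank- and order-preserving surjection of \emph{posets}, rather than merely a map of graphs. This is where the machinery behind the canonical representation, together with the principle that a flag graph and its coloring determine the polytope (faces as components of the edge-deleted subgraphs, incidences as nonempty intersections of components), carries the genuine content; the surjectivity argument via connectivity in (a) $\Rightarrow$ (b) and the $W$-equivariance in (b) $\Rightarrow$ (c) are then comparatively routine bookkeeping about the $W$-action.
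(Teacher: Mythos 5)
The paper states this proposition without any proof (it is treated as immediate from the flag-action dictionary set up in Section 2.3 and the cited properties from \cite{mixing-and-monodromy}), so there is no argument of the authors' to compare against; your cycle (a) $\Rightarrow$ (b) $\Rightarrow$ (c) $\Rightarrow$ (a) is correct and is exactly the intended machinery. In particular, the connectivity argument for surjectivity, the $W$-equivariance step, the coset-projection $wS \mapsto wT$ via the canonical representation, and your identification of the one substantive point --- that a surjective color-preserving homomorphism of flag graphs descends to a rank- and incidence-preserving surjection of posets because faces are components of the edge-deleted subgraphs --- all check out.
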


	The definition of coverings extends naturally to pre-polytopes and maniplexes.
	When a regular pre-polytope $\calP$ covers a regular polytope $\calQ$, the
	\emph{quotient criterion} says that if $\calP$ and $\calQ$ have isomorphic facets, then
	$\calP$ is itself a polytope \cite[Thm. 2E17]{arp}.
	The same is true for chiral polytopes \cite[Lem. 3.2]{chiral-mix}. 
	Both results use algebraic arguments that seem difficult to generalize to arbitrary polytopes.
	Perhaps the Component Intersection Property (see the end of Section~\ref{sec:polytopes}) could
	be used to develop a combinatorial version of the quotient criterion that could be used
	for polytopes in general.
	
	\begin{problem}
	Find a combinatorial analogue of the quotient criterion.
	\end{problem}

	Every $n$-polytope is covered by the regular universal $n$-polytope $\calU$ with
	$\G(\calU) = W = [\infty, \ldots, \infty]$. Furthermore, every \emph{finite} polytope is covered by a 
	\emph{finite} regular polytope \cite{finite-covers}.
	How can we find the smallest regular polytope that covers a given polytope?

	If we drop the requirement that the regular cover must be polytopal, then every polytope $\calP$
	has a unique minimal regular cover $\calR$. That is, if $\calQ$ is a regular polytope
	(or reflexible maniplex) such that $\calQ \covers \calP$, then $\calQ \covers \calR$. We obtain $\calR$ by building a poset
	from the group $\Mon(\calP)$ such that $\G(\calR) \cong \Mon(\calP)$.
	If $\Mon(\calP)$ is a string C-group, then $\calR$ will be a polytope, and thus the minimal regular
	polytope that covers $\calP$ \cite[Prop. 3.16]{mixing-and-monodromy}. In rank 3, the monodromy
	group is always a string C-group, and so every polyhedron has a unique polytopal minimal
	regular cover \cite[Cor. 6.2]{mixing-and-monodromy}.

	In light of \cite[Prop. 3.16]{mixing-and-monodromy}, it is useful to know when $\Mon(\calP)$ is a
	string C-group. See \cite[Section 6]{mixing-and-monodromy} for some partial results.
	
	\begin{problem}
	Give a characterization of the polytopes $\calP$ such that $\Mon(\calP)$ is a string C-group.
	\end{problem}
	
	If $\Mon(\calP)$ is not a string C-group, it seems it may still be the case that $\calP$ has a unique
	minimal regular polytope that covers it. No examples of such a polytope are known.
	
	\begin{problem}
	Find a polytope $\calP$ such that $\Mon(\calP)$ is not a string C-group, but $\calP$ does have
	a unique minimal regular polytopal cover; or show the non-existence of such a polytope.
	\end{problem}

	If polytopes with a unique minimal regular cover represent one extreme, then the \emph{Tomotope}
	${\mathcal T}$ represents the other extreme. The Tomotope is a 4-polytope such that $\Mon({\mathcal T})$ is not a
	string C-group \cite{tomotope}. Furthermore, 
	there are infinitely many finite regular polytopes that cover ${\mathcal T}$, with no covering
	relations between them.
	
	One can also pose the problem of minimal regular covers completely in terms of groups.
	Essentially, one wants to find a minimal string C-group that covers a given string group generated
	by involutions.
	
	\begin{problem}
	Given a string group generated by involutions, determine the minimal string C-groups
	that cover it.
	\end{problem}

	The \emph{chirality group} of a chiral polytope $\calP$ is a subgroup $X(\calP)$ of
	$\G(\calP)$ that gives some information about how far $\calP$ is from being regular.
	In particular, 
	\[ |\Mon(\calP)| = 2|\G(\calP)| \cdot |X(\calP)|. \]
	In other words, $|X(\calP)|$ is the index of the automorphism group of $\calP$ on the automorphism group of the smallest regular maniplex covering $\calP$. This number equals the index of the automorphism group of the largest regular structure (it may even fail to be a maniplex) covered by $\calP$ on the automorphism group of $\calP$.
	
	If $S_1$ and $S_2$ are the
	two distinct flag-stabilizers of $\calP$, then
	\[ X(\calP) = S_1 S_2 / S_1 \cong S_2 / (S_1 \cap S_2). \]
	The same definition works for any two-orbit polytopes. It would be useful to extend this definition to an \emph{irregularity group} of 
	a $k$-orbit polytope with $k \geq 3$. This is not entirely straightforward. 
	Unlike with two-orbit polytopes, in the general case, different flag-stabilizers may
	have different normalizers. This means that given flag-stabilizers $S_1$ and $S_2$,
	expressions like $S_2 / (S_1 \cap S_2)$ may not
	represent groups, as there is no guarantee that $S_1 \cap S_2$ is normal in $S_2$.
	Furthermore, with more flag-stabilizers, there are many more such expressions which
	(in general) provide groups or coset spaces of different sizes.
	
	\begin{problem}
	Determine the ``correct'' way to generalize the chirality group to $k$-orbit polytopes.
	\end{problem}
	
	Even when the structure of $\Mon(\calP)$ may be difficult to determine, it would be useful to
	find its size. As with chiral polytopes, it may be that knowing the size of the (generalization of)
	the chirality group would help with this problem.
	
	\begin{problem}
	Given a polytope $\calP$, determine $|\Mon(\calP)|$.
	\end{problem}

	Let us find some bounds on $|\Mon(\calP)|$ that use only basic information about $\calP$.
	Suppose $\calP$ is a $k$-orbit polytope. Let $S_1, \ldots, S_k$ be the $k$ distinct flag-stabilizers 
	of $\calP$, and let $N_1, \ldots, N_k$ be the corresponding normalizers in $W$. Then $W$
	permutes the normalizers by conjugation, and it follows that 
	\[ [W: N_1 \cap \cdots \cap N_k] \leq k!. \]
	Now, since each $S_i$ is a subgroup of $N_i$, a standard result in group theory gives
	\[ [N_1 \cap \cdots \cap N_k : S_1 \cap \cdots \cap S_k] \leq [N_1: S_1] \cdots [N_k: S_k]. \]
	Since $\G(\calP) \cong N_i / S_i$ for each $i$, the right hand side is just
	$|\G(\calP)|^k$. Putting everything together with the fact that $\Mon(\calP) \cong
	W / (S_1 \cap \cdots \cap S_k)$ gives us
	\[ |\Mon(\calP)| \leq [W: N] \cdot |\G(\calP)|^k \leq k! \, |\G(\calP)|^k. \]
	
	To get a lower bound, let us define $T_1, \ldots, T_k$ by $T_i = \cap_{j \neq i} S_j$.
	So $T_i$ fixes all flags in $k-1$ of the orbits, while permuting the flags in the last
	orbit. Then since $T_i$ fixes the orbit of every flag, it follows that $T_i \subseteq N_1 \cap
	\cdots \cap N_k$. In fact, $T_i$ is a normal subgroup of the latter, since $T_i$ is an intersection
	of subgroups $S_j$ each normalized by $N_j$.

	Let $S = S_1 \cap \cdots \cap S_k$ and $N = N_1 \cap \cdots \cap N_k$. 
	The group $S$ is normal in each $T_i$ since $T_i$ is contained in each normalizer $N_j$. 
	So $S$ is normal in $T_1 \cdots T_k$, which is itself normal in $N$. It follows that
	$(T_1 \cdots T_k) / S$ is normal in $N/S$. We can bound the size of the former as follows.
	Note that 
	\[ (T_1 \cdots T_{i-1} T_{i+1} \cdots T_k) \cap T_i \subseteq S_i \cap T_i = S. \]
	Since $S$ is contained in each $T_i$, it follows that $(T_1 \cdots T_{i-1} T_{i+1} \cdots T_k) \cap T_i = S$ for each $i$.
	Then
	\[ ((T_1/S) \cdots (T_{i-1}/S) (T_{i+1}/S) \cdots (T_k/S)) \cap (T_i/S) = \langle \eps \rangle. \]
	Then by a standard result in group theory (see \cite[Cor. 8.7]{hungerford}), 
	\[ (T_1 / S) \cdots (T_k / S) \cong (T_1 / S) \times \cdots \times (T_k / S). \]
	Since the groups $T_i/S$ are all conjugate in $W/S$, it follows that the above has order
	$|T_1/S|^k$. Thus
	\[ |\Mon(\calP)| = [W: N] \cdot |N/S| \geq [W: N] \cdot |T_1/S|^k. \]
	We summarize in the following proposition.
	
	\begin{proposition}
	Let $\calP$ be a $k$-orbit polytope. Let $S$ be the intersection of all $k$ distinct flag-stabilizers of $\calP$, let
	$N$ be the intersection of the normalizers of every flag-stabilizer, and let $T$ be the intersection of any
	$k-1$ distinct flag-stabilizers. Then
	\[ [W: N] \cdot |T/S|^k \leq |\Mon(\calP)| \leq [W: N] \cdot |\G(\calP)|^k \leq k! \, |\G(\calP)|^k. \]
	\end{proposition}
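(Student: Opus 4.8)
The plan is to reduce everything to the single index computation $|\Mon(\calP)| = [W:S]$, where $S = S_1 \cap \cdots \cap S_k$, and then factor this index through $N$. The identity follows from the identification $\Mon(\calP) \cong W/\Core_W(S_1)$ recorded earlier, together with the observation that the conjugates of a single flag-stabilizer $S_1$ are exactly the $k$ distinct flag-stabilizers: since $W$ is flag-transitive on $\calU$ and conjugation by $w$ carries $\Stab_W(\Phi)$ to $\Stab_W(w\Phi)$, we get $\Core_W(S_1) = S_1 \cap \cdots \cap S_k = S$, and in particular $S \trianglelefteq W$. Writing $[W:S] = [W:N]\,[N:S]$, the whole proposition then amounts to the estimates $|T/S|^k \le [N:S] \le |\G(\calP)|^k$ together with $[W:N] \le k!$.

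For the upper bounds I would first let $W$ act by conjugation on the $k$-element set $\{S_1,\ldots,S_k\}$; this gives a homomorphism $W \to \Sym_k$ whose kernel is $\{w : wS_iw^{-1}=S_i \text{ for all } i\} = N$, so $[W:N] \le k!$. For $[N:S]$, since $S_i \trianglelefteq N_i$ each quotient $N_i/S_i$ is a group, and the map $N/S \to \prod_i N_i/S_i$ sending $nS \mapsto (nS_1,\ldots,nS_k)$ is well defined (as $N \le N_i$ for each $i$) and injective, its fiber over the identity being $\bigcap_i S_i = S$. Hence $[N:S] \le \prod_i [N_i:S_i]$, and each factor equals $|\G(\calP)|$ by \pref{flag-action-properties}(c). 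This yields the right two inequalities.

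For the lower bound I would build an internal direct product inside $N/S$ from the groups $T_i/S$. First, each $T_i$ fixes every orbit setwise: it fixes the flags in the $k-1$ orbits $j \ne i$ pointwise, hence can only permute the remaining orbit among itself, so $T_i \le N_l$ for every $l$, i.e. $T_i \le N$; and since $N$ normalizes each $S_j$ it normalizes $T_i = \bigcap_{j\ne i}S_j$, so $T_i \trianglelefteq N$ and likewise $S \trianglelefteq N$. The crucial step is the independence relation $\bigl(\prod_{j\ne i}T_j\bigr)\cap T_i = S$: the inclusion $\supseteq$ is immediate since $S$ lies in every $T_j$, while for $\subseteq$ one notes that $T_j \subseteq S_i$ whenever $j \ne i$, so the product lies in $S_i$, and meeting it with $T_i = \bigcap_{l\ne i}S_l$ forces membership in $\bigcap_l S_l = S$. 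Passing to $N/S$ turns this into the trivial-intersection hypothesis of the internal direct product criterion (\cite[Cor. 8.7]{hungerford}), giving $(T_1/S)\cdots(T_k/S) \cong \prod_i (T_i/S)$. Finally, because $S \trianglelefteq W$ and the $T_i$ form a single conjugacy class under $W$, all factors $T_i/S$ are conjugate in $W/S$ and hence have the common order $|T/S|$, so $|N/S| \ge |T/S|^k$.

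I expect the main obstacle to be the independence step and the bookkeeping of normality that makes it usable: one must verify both containments in $\bigl(\prod_{j\ne i}T_j\bigr)\cap T_i = S$ and confirm that enough subgroups are normal (every $T_i$ in $N$, and $S$ in $W$) to legitimately descend to $N/S$, to apply the direct-product recognition theorem, and to conclude that the factors are mutually conjugate and therefore equal in size. The index bounds, by contrast, are routine once the conjugation action on $\{S_1,\ldots,S_k\}$ and the injection $N/S \hookrightarrow \prod_i N_i/S_i$ are in place.
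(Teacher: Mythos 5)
Your proof is correct and takes essentially the same route as the paper's: both decompose $|\Mon(\calP)|=[W:S]=[W:N]\,[N:S]$, bound $[W:N]\le k!$ via the conjugation action on the stabilizers, bound $[N:S]\le\prod_i[N_i:S_i]=|\G(\calP)|^k$, and obtain the lower bound from the internal direct product $(T_1/S)\cdots(T_k/S)$ using the identity $\bigl(\prod_{j\ne i}T_j\bigr)\cap T_i=S$ and the conjugacy of the $T_i$ in $W$. The only differences are cosmetic: you spell out a few steps the paper leaves as ``standard'' (the exact kernel of the permutation action and the diagonal embedding $N/S\hookrightarrow\prod_i N_i/S_i$).
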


\subsection{Mixing}	

	The \emph{mix} of two regular or chiral polytopes (defined in \cite{mix-face, chiral-mix}) constructs their minimal common cover. 
	This is helpful for constructing chiral polytopes, as well as polytopes that are invariant under certain operations; 
	see \cite{chiral-mix, self-dual-chiral, var-gps, mixing-and-monodromy}. There is a natural candidate for
	the minimal common cover of two rooted polytopes:
	
	\begin{definition}
	The \emph{mix} of $(\calP, \Phi)$ and $(\calQ, \Psi)$, denoted $(\calP, \Phi) \mix (\calQ, \Psi)$,
	is the rooted maniplex $\calU(\Stab_W(\Phi) \cap \Stab_W(\Psi))$.
	\end{definition}

	The rooted maniplex $(\calP, \Phi) \mix (\calQ, \Psi)$ satisfies the following universal property:
	every rooted maniplex that covers $(\calP, \Phi)$ and $(\calQ, \Psi)$
	also covers $(\calP, \Phi) \mix (\calQ, \Psi)$. This follows immediately from the fact
	that $(\calR, \Lambda)$ covers $(\calP, \Phi)$ if and only if $\Stab_W(\Lambda) \leq \Stab_W(\Phi)$.

	It is of course possible to mix multiple rooted polytopes together, and this operation is naturally
	commutative and associative (as it must be, since it simply corresponds to taking the intersection
	of flag-stabilizers). 
	
	\begin{example}\label{ex:mixFlagOrbits}
	Let $\calP$ be a $k$-orbit polytope, and let $\Phi_1, \ldots, \Phi_k$ be
	flags of $\calP$, one from each flag orbit. Then 
	\[ (\calP, \Phi_1) \mix \cdots \mix (\calP, \Phi_k) \]
	is the minimal regular cover of $\calP$, with automorphism group isomorphic to $\Mon(\calP)$.
	\end{example}

	Our definition of the mix bears little resemblance to the usual definition used for mixing
	regular polytopes, which describes a ``diagonal subgroup'' of the direct product of the
	automorphism groups (see \cite[Def 5.1]{mixing-and-monodromy}). There are two good reasons
	for this. The first is that the automorphism groups of two arbitrary polytopes will usually
	have different generating sets, so that there is no notion of a diagonal subgroup of the
	direct product. The second is that even when we can overcome the first difficulty, we end
	up with a group, and it is not immediately clear how to build the correct polytope with this
	automorphism group (see Problem~\ref{prop:coset-geo}).

	As an aside, we note that the mix of chiral polytopes (as used in \cite{chiral-mix, mix-ch} and elsewhere)
	was already using rooted polytopes, just with a different (and perhaps slightly misleading) language.
	
	We can also define the mix of rooted polytopes using their flag graphs.
	Consider the flag graphs $(\fgraph_{\calP}, \Phi)$ and $(\fgraph_{\calQ}, \Psi)$. 
	We define a new graph $\fgraph$ with vertex set $V(\fgraph_{\calP}) \times V(\fgraph_{\calQ})$ and with 
	an $i$-edge between $(\Phi_1, \Psi_1)$ and $(\Phi_2, \Psi_2)$ if and only if there is an $i$-edge between 
	$\Phi_1$ and $\Phi_2$ in $\fgraph_{\calP}$ and an $i$-edge between $\Psi_1$ and $\Psi_2$ in $\fgraph_{\calQ}$. 
	We define the mix (or \emph{parallel product}) of $\fgraph_{\calP}$ with $\fgraph_{\calQ}$ (denoted 
	$\fgraph_{\calP} \mix \fgraph_{\calQ}$) to be $\fgraph$,
	and we define the mix of $(\fgraph_{\calP}, \Phi)$ with $(\fgraph_{\calQ}, \Psi)$ 
	(denoted $(\fgraph_{\calP}, \Phi) \mix (\fgraph_{\calQ}, \Psi)$) to be the
	connected component of $\fgraph$ that contains $(\Phi, \Psi)$.
	It is easy to show from the universal property of the mix that $(\fgraph_{\calP}, \Phi) \mix (\fgraph_{\calQ}, \Psi)$
	is the flag graph of $(\calP, \Phi) \mix (\calQ, \Psi)$. This alternative definition of the mix is a generalization of the definition of parallel product of maps in \cite{parallel-product}.
	
	The mix of $T(\calP)$ with $T(\calQ)$, or of $T(\calP, \Phi)$ with $T(\calQ, \Psi)$, can be defined in the same way.
	There is a nice relationship between the mix of symmetry type graphs of rooted polytopes, and
	the symmetry type graph of the mix.
	
	\begin{proposition}
	\label{prop:sym-type-graph}
	If $(\calP, \Phi)$ and $(\calQ, \Psi)$ are rooted $n$-polytopes, then there is a surjective
	color-preserving graph homomorphism from $T(\calP, \Phi) \mix T(\calQ, \Psi)$ to
	$T \big( (\calP, \Phi) \mix (\calQ, \Psi) \big)$. 
	\end{proposition}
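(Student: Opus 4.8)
The plan is to carry out the whole argument through the identification of flag-orbits with $W$-conjugacy classes of flag-stabilizers. Write $S_\calP = \Stab_W(\Phi)$, $S_\calQ = \Stab_W(\Psi)$, and $S = S_\calP \cap S_\calQ$, so that $(\calP,\Phi)\mix(\calQ,\Psi) = \calU(S)$; call this rooted maniplex $\calM$, with base flag $\Omega$. By \pref{flag-action-properties} and the description of $T(\calP)$ in terms of stabilizers, I would take the vertices of $T(\calP)$ to be the conjugates $uS_\calP u^{-1}$ ($u \in W$), with an $i$-edge joining two conjugates exactly when they are conjugate by $r_i$, and likewise for $T(\calQ)$ and for $T(\calM)$ (whose vertices are the conjugates $uSu^{-1}$). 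Since the excerpt already records that $\fgraph_\calM$ is the rooted parallel product $(\fgraph_\calP,\Phi)\mix(\fgraph_\calQ,\Psi)$, I would build the desired map by pushing the two coordinate projections of $\fgraph_\calM$ down to symmetry type graphs, rather than guessing the map directly.

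Concretely, the coordinate projections $\fgraph_\calM \to \fgraph_\calP$ and $\fgraph_\calM \to \fgraph_\calQ$ are surjective color-preserving graph homomorphisms, and composing them with the quotient maps onto $T(\calP)$ and $T(\calQ)$ yields a single color-preserving homomorphism $\psi \colon \fgraph_\calM \to T(\calP) \mix T(\calQ)$. Because $\fgraph_\calM$ is connected and $\psi$ carries $\Omega$ to the base vertex, the image of $\psi$ lies in the connected component $T(\calP,\Phi)\mix T(\calQ,\Psi)$, and tracing $u\Omega \mapsto (uS_\calP u^{-1}, uS_\calQ u^{-1})$ shows $\psi$ fills out that component (on vertices and on edges). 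Let $q \colon \fgraph_\calM \to T(\calM)$ be the quotient by $\G(\calM)$. The crux is to verify that $\psi$ refines $q$: the flag $u\Omega$ has $W$-stabilizer $uSu^{-1} = uS_\calP u^{-1} \cap uS_\calQ u^{-1}$, which is determined by the pair $\psi(u\Omega) = (uS_\calP u^{-1}, uS_\calQ u^{-1})$, and by \pref{flag-action-properties}(d) two flags of $\calM$ lie in the same $\G(\calM)$-orbit if and only if they share a $W$-stabilizer; hence $\psi(x) = \psi(y)$ forces $q(x) = q(y)$. Surjectivity of $\psi$ then lets $q$ factor uniquely as $q = \varphi \circ \psi$, producing a surjective color-preserving graph homomorphism $\varphi \colon T(\calP,\Phi)\mix T(\calQ,\Psi) \to T(\calM)$, given on vertices by $(uS_\calP u^{-1}, uS_\calQ u^{-1}) \mapsto uS_\calP u^{-1} \cap uS_\calQ u^{-1}$; that is, $\varphi$ intersects the two aligned flag-stabilizers.

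The substance is entirely in this refinement step, and the subtleties I would be careful about are bookkeeping rather than genuine obstacles. First, the two coordinates of a vertex of $T(\calP,\Phi)\mix T(\calQ,\Psi)$ are conjugated by a \emph{common} word $u$ — precisely what membership in the connected component of the base pair guarantees — so that the intersection $uSu^{-1}$ is well-defined as a function of the pair. Second, conjugation by $r_i$ commutes with intersection, so $\varphi$ sends $i$-edges to $i$-edges, with an edge legitimately folding onto an $i$-semiedge of $T(\calM)$ exactly when $r_i$ fixes $uSu^{-1}$ but not the pair $(uS_\calP u^{-1}, uS_\calQ u^{-1})$. I would stress at the end that $\varphi$ is in general \emph{not} injective: distinct aligned pairs $(A,B) \neq (A',B')$ may satisfy $A \cap B = A' \cap B'$, and this collapsing is exactly why $T\big((\calP,\Phi)\mix(\calQ,\Psi)\big)$ can be a proper quotient of $T(\calP,\Phi)\mix T(\calQ,\Psi)$, so that the statement claims only a surjection.
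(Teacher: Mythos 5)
Your proof is correct and follows essentially the same route as the paper's: both identify the vertices of $T(\calP,\Phi)\mix T(\calQ,\Psi)$ with pairs of flag-stabilizers conjugated by a common element of $W$, and both define the homomorphism by intersecting the two members of each pair. The only difference is that you spell out the well-definedness, surjectivity, and edge-preservation checks (via factoring the quotient map through the flag graph of the mix) that the paper's three-sentence proof leaves implicit.
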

	
	\begin{proof}
	Let $S = \Stab_W(\Phi)$ and $S' = \Stab_W(\Psi)$. We may identify the vertices of 
	$T(\calP, \Phi) \mix T(\calQ, \Psi)$ with pairs $(w^{-1} S w, w^{-1} S' w)$, where $w \in W$.
	Similarly, we may identify the vertices of $T\left( (\calP, \Phi) \mix (\calQ, \Psi) \right)$
	with $w^{-1} (S \cap S') w$. Then the map that sends each $(w^{-1} S w, w^{-1} S' w)$ to
	$w^{-1} (S \cap S') w$ has the desired properties.
	\end{proof}
	
In general the graph homomorphism in Proposition \ref{prop:sym-type-graph} is not an isomorphism. The simplest example is to consider $\calP = \calQ$ a two-orbit polytope with $\Phi$ and $\Psi$ in distinct flag orbits (see Example \ref{ex:mixFlagOrbits}).
	
	\begin{corollary}
	If $\calP$ is a $k$-orbit polytope and $\calQ$ is an $m$-orbit polytope,
	then the underlying polytope of $(\calP, \Phi) \mix (\calQ, \Psi)$ has at most $km$ flag orbits for any choice of $\Phi$ and $\Psi$.
	\end{corollary}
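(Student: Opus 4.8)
The plan is to read the number of flag orbits off the symmetry type graph and then invoke \pref{sym-type-graph}. Recall that for any rooted polytope (or rooted maniplex) the vertices of its symmetry type graph are, by definition, in bijection with its flag orbits. Hence the number of flag orbits of the underlying polytope of $(\calP,\Phi)\mix(\calQ,\Psi)$ equals the number of vertices of $T\big((\calP,\Phi)\mix(\calQ,\Psi)\big)$, and it suffices to bound the latter by $km$.

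First I would bound the number of vertices of $T(\calP,\Phi)\mix T(\calQ,\Psi)$. By the definition of the mix of symmetry type graphs, this graph is the connected component containing the vertex $(\Phi\text{-orbit},\Psi\text{-orbit})$ of the parallel product $T(\calP)\mix T(\calQ)$, whose vertex set is $V(T(\calP))\times V(T(\calQ))$. Since $\calP$ has $k$ flag orbits and $\calQ$ has $m$, this product has exactly $km$ vertices, so every connected component — in particular $T(\calP,\Phi)\mix T(\calQ,\Psi)$ — has at most $km$ vertices.

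Next I would apply \pref{sym-type-graph}, which furnishes a surjective color-preserving graph homomorphism from $T(\calP,\Phi)\mix T(\calQ,\Psi)$ onto $T\big((\calP,\Phi)\mix(\calQ,\Psi)\big)$. A surjective graph homomorphism is in particular surjective on vertex sets, so the target has no more vertices than the source. Chaining the three observations yields that the number of flag orbits of the mix is at most $km$.

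There is essentially no hard step here; the only point that requires a little care is that $(\calP,\Phi)\mix(\calQ,\Psi)$ need not be polytopal, so one should confirm that both ``number of flag orbits $=$ number of vertices of the symmetry type graph'' and \pref{sym-type-graph} remain valid at the level of rooted maniplexes. This is immediate from the proof of \pref{sym-type-graph}, which works entirely with flag-stabilizers and their conjugates in $W$ and never uses polytopality. One could also observe, via Example~\ref{ex:mixFlagOrbits}, that the bound is in general not attained.
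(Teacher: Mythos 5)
Your argument is correct and is exactly the intended derivation: the paper states this as an immediate corollary of \pref{sym-type-graph}, relying on the fact that $T(\calP,\Phi)\mix T(\calQ,\Psi)$ is a connected component of a graph on $km$ vertices and that the surjective homomorphism of the proposition cannot increase the vertex count. Your extra remark that everything goes through at the level of rooted maniplexes (since the proof of \pref{sym-type-graph} only uses flag-stabilizers in $W$) is a worthwhile precaution the paper leaves implicit.
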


	\begin{corollary}
	Let $(\calP, \Phi)$ and $(\calQ, \Psi)$ be rooted $k$-orbit polytopes with the same symmetry type graph
	$G$. If the orbits of $\Phi$ and $\Psi$ correspond to the same vertex of $G$, then $(\calP, \Phi) \mix
	(\calQ, \Psi)$ has at most $k$ orbits.
	\end{corollary}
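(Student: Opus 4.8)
The plan is to count the vertices of the symmetry type graph $T\big((\calP,\Phi) \mix (\calQ,\Psi)\big)$, since the number of flag orbits of a polytope equals the number of vertices of its symmetry type graph. By \pref{sym-type-graph}, there is a surjective color-preserving graph homomorphism from $T(\calP,\Phi) \mix T(\calQ,\Psi)$ onto this graph, and a surjection cannot increase the number of vertices. So it suffices to show that $T(\calP,\Phi) \mix T(\calQ,\Psi)$ itself has at most $k$ vertices.

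Write $G$ for the common symmetry type graph, and let $v_0$ be the vertex of $G$ to which the orbits of both $\Phi$ and $\Psi$ correspond. Recall that $T(\calP,\Phi) \mix T(\calQ,\Psi)$ is the connected component containing $(v_0, v_0)$ of the parallel product of $G$ with itself, where there is an $i$-edge from $(u_1,u_2)$ to $(w_1,w_2)$ exactly when $G$ has an $i$-edge from $u_1$ to $w_1$ and an $i$-edge from $u_2$ to $w_2$. The key observation is that a symmetry type graph is $n$-regular with a \emph{unique} $i$-neighbor (possibly the vertex itself, via a semi-edge) at each vertex for every label $i$. Since both factors are literally the same graph $G$, following the $i$-edge out of a diagonal vertex $(v,v)$ must move both coordinates to the same unique $i$-neighbor $v\cdot i$ of $v$, landing at the diagonal vertex $(v\cdot i, v\cdot i)$. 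A short induction on path length then shows that the connected component of $(v_0,v_0)$ consists entirely of diagonal vertices.

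To finish, I would use that the flag graph of a polytope is connected, so its quotient $G$ is connected, and hence every diagonal vertex $(v,v)$ with $v \in G$ is reachable from $(v_0,v_0)$ along the diagonal. Therefore $T(\calP,\Phi) \mix T(\calQ,\Psi)$ is isomorphic to $G$ and has exactly $k$ vertices, giving the desired bound through \pref{sym-type-graph}. The one point I would treat carefully is the bookkeeping around semi-edges: when $v$ carries a semi-edge labeled $i$ (that is, $v\cdot i = v$), the parallel product carries a corresponding semi-edge at $(v,v)$, so the diagonal is preserved in this degenerate case as well. I do not expect any serious obstacle here, since the whole argument rests only on the uniqueness of $i$-neighbors in a symmetry type graph together with \pref{sym-type-graph}; no intersection-condition or polytopality considerations enter, as we are merely counting vertices of a quotient graph.
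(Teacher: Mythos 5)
Your proof is correct and follows exactly the route the paper intends: the corollary is stated as an immediate consequence of \pref{sym-type-graph}, and your argument---that the component of the diagonal basepoint in $G \mix G$ stays on the diagonal because each vertex of a symmetry type graph has a unique $i$-neighbor, so $T(\calP,\Phi)\mix T(\calQ,\Psi)\cong G$ has $k$ vertices and the surjection gives the bound---is precisely the implicit deduction. Your care with semi-edges is appropriate but, as you note, harmless, since a semi-edge just means the unique $i$-neighbor is the vertex itself.
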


	The mix of two rooted polytopes is not always polytopal. Several results in the literature
	describe sufficient conditions for the mix of regular or chiral polytopes to be polytopal; see
	\cite[Thm. 3.7]{mix-ch}, \cite[Thm. 5.12]{mixing-and-monodromy}, \cite[Prop. 5.15]{mixing-and-monodromy}.
	It would be useful to find similar results for arbitrary polytopes.
	
	\begin{problem}
	Give a characterization of pairs of rooted polytopes whose mix is itself a polytope.
	\end{problem}
	
	Here is one such result which generalizes \cite[Thm. 5.15]{mixing-and-monodromy}.
	
	\begin{theorem}
	The mix of two rooted polyhedra is polytopal.
	\end{theorem}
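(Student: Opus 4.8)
The plan is to show that the rooted maniplex $\calM = (\calP,\Phi)\mix(\calQ,\Psi) = \calU(S\cap S')$, where $S = \Stab_W(\Phi)$ and $S' = \Stab_W(\Psi)$, satisfies the component intersection property (CIP), which by the characterization of polytopality recalled in Section~\ref{sec:polytopes} is equivalent to being polytopal. Write $K = S\cap S'$ and identify the flags of $\calM$ with the cosets $wK$ in $W = \langle r_0,r_1,r_2\rangle$, where $r_i$ realizes $i$-adjacency. Since $K\le S$ and $K\le S'$, the two coordinate maps $p\colon wK\mapsto wS$ and $q\colon wK\mapsto wS'$ are surjective color-preserving coverings $\calM\covers\calP$ and $\calM\covers\calQ$, and I will use them to pull facts back from the genuine polytopes $\calP$ and $\calQ$. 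The faces of $\calM$ are the $i$-deleted components: vertices are the $\{1,2\}$-components, edges the $\{0,2\}$-components, and facets the $\{0,1\}$-components. Because $|0-2|\ge 2$, the $\{0,2\}$-edges of any maniplex close up into alternating squares, so every edge of $\calM$ is a single $4$-cycle $\{a,a^0,a^2,a^{02}\}$; this is automatic and needs no polytopality.

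For a rank-$3$ maniplex the CIP reduces to four statements about incident chains, three of which are quick. For an incident vertex $v$ and edge $e$ sharing a flag $a$: a covering sends $0$-adjacent flags to $0$-adjacent flags, and $0$-adjacent flags of $\calP$ lie in distinct vertices, so neither $a^0$ nor $a^{02}$ can lie in $v$ (otherwise $p$ would place two $0$-adjacent flags in one vertex of $\calP$); hence $v\cap e\subseteq\{a,a^2\}$, a $2$-edge. Dually, since $2$-adjacent flags of $\calP$ lie in distinct facets, an incident edge $e$ and facet $f$ satisfy $e\cap f\subseteq\{a,a^0\}$, a $0$-edge. For a full chain $v<e<f$ these combine to give $v\cap e\cap f\subseteq\{a,a^2\}\cap\{a,a^0\}=\{a\}$. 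All three intersections are connected.

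The one substantial case is an incident vertex $v$ and facet $f$, sharing a flag $g = wK$; here $v$ and $f$ share only the label $1$, so $v\cap f$ is a union of $1$-edges and is connected precisely when it is a single $1$-edge. Now $p(v)$ and $p(f)$ are exactly the vertex and facet of $\calP$ through $wS$, which by the CIP for $\calP$ meet in $\{wS,r_1wS\}$, and likewise $q(v)\cap q(f)=\{wS',r_1wS'\}$. Since a flag of $\calM$ is determined by its pair of projections, this forces $v\cap f\subseteq\{g,g^1,X,Y\}$, where $X$ and $Y$ are the two ``diagonal'' flags whose images under $(p,q)$ are $(wS,r_1wS')$ and $(r_1wS,wS')$. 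The flags $g$ and $g^1 = r_1 g$ form the desired single $1$-edge, so everything hinges on excluding $X$ and $Y$; this is the main obstacle.

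To rule out the diagonals I will use that every rank-$2$ section of a polyhedron is a polygon. The vertex-figure of $\calP$ at the vertex of $wS$ is a polygon, so the stabilizer of that flag inside the rank-$2$ parabolic $W_{12} = \langle r_1,r_2\rangle$ (an infinite dihedral group) is a rotation subgroup $\langle (r_1r_2)^{p}\rangle$, which is normal in $W_{12}$ and contained in $\langle r_1r_2\rangle$; call it $\ker_1$, and let $\ker_2\le W_{12}$ be the analogous subgroup for $\calQ$. A short computation shows that $X\in v$ would require an element $u\in\ker_1$ with $r_1u\in\ker_2$, that is $r_1\in\ker_1\ker_2$. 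But $\ker_1$ and $\ker_2$ are both contained in the cyclic rotation subgroup $\langle r_1r_2\rangle$, so their product lies in $\langle r_1r_2\rangle$, whereas $r_1$ is a reflection lying outside it --- a contradiction. The same computation excludes $Y$. Hence $v\cap f = \{g,g^1\}$ is connected, the CIP holds, and $\calM$ is polytopal. The crux is genuinely a rank-$3$ phenomenon: it is exactly the regularity of the polygonal vertex-figures, forcing $\ker_1$ and $\ker_2$ to be reflection-free rotation subgroups, that prevents an incident vertex and facet from meeting in more than one edge.
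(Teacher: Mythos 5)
Your proof is correct and follows essentially the same route as the paper's: both verify the CIP for the mixed flag graph, dispose of the vertex--edge and edge--facet intersections by projecting to one factor, and settle the vertex--facet case using the fact that closed walks in $\langle r_1,r_2\rangle$ (or $\langle r_0,r_1\rangle$) stabilizing a flag of a polyhedron have even length --- which is exactly your observation that the stabilizers $\ker_1,\ker_2$ are rotation subgroups of the infinite dihedral parabolic. The only difference is presentational: where the paper enumerates the possible induced subgraphs and rules out flag identifications by parity, you reduce to four candidate flags via the two projections and exclude the two diagonals by a coset computation, a slightly tidier packaging of the same argument.
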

	
	\begin{proof}
	Let $(\calP, \Phi)$ and $(\calQ, \Psi)$ be rooted polyhedra, and
	let $\fgraph = (\fgraph_{\calP}, \Phi) \mix (\fgraph_{\calQ}, \Psi)$. 
	To show that $(\calP, \Phi) \mix (\calQ, \Psi)$ is polytopal, it suffices to show
	that $\fgraph$ satisfies the CIP (see the end of Section~\ref{sec:polytopes} and \cite[Thm. 4.5]{poly-mani}). 
	For $i \in \{0, 1, 2\}$, let $F_i$ be an $i$-face of $\fgraph$. (Recall that this means
	that $F_i$ is a connected component of $\fgraph_i$, where the latter is obtained from
	$\fgraph$ by deleting all $i$-edges.) In this case, the CIP consists of three nontrivial
	conditions: we need to show that $F_i \cap F_j$ is either empty or connected for
	$(i, j) \in \{(0, 1), (1, 2), (0, 2)\}$.

	Suppose to the contrary that $F_0 \cap F_1$ is nonempty and disconnected. The face $F_1$ must consist of
	a single 4-cycle with two $0$-edges and two $2$-edges. Then the only way for $F_0 \cap F_1$
	to be disconnected is for it to consist of both $2$-edges of $F_1$. Then $\fgraph$
	has a subgraph with the following form:
	\[ \xymatrix {
	(\Phi_1, \Psi_1) \ar@{-}[d]_2 \ar @{-} @/^/ [rr]^0 \ar@{.}@/_/[rr]_{121\cdots21} & & (\Phi_4, \Psi_4) \ar@{-}[d]^2 \\
	(\Phi_2, \Psi_2) \ar@/_/@{-}[rr]_0 \ar@{.}@/^/[rr]^{121\cdots21} & & (\Phi_3, \Psi_3)
	} \]
	There is a covering from $\fgraph$ to $\fgraph_{\calP}$, obtained by simply keeping
	$\Phi_i$ from each pair $(\Phi_i, \Psi_i)$. We note that the flags $\Phi_1, \ldots, \Phi_4$
	are incident on a single edge of $\calP$, and since $\calP$ is a polyhedron, it follows that
	they are distinct. So we get essentially the same subgraph in $\fgraph_{\calP}$. But this then
	gives us a disconnected intersection in $\calP$, violating its polytopality. So $F_0
	\cap F_1$ cannot be disconnected. A dual argument shows that the same is true of $F_1 \cap F_2$. 
	
	Now suppose that $F_0 \cap F_2$ is nonempty and disconnected. Then $F_0$ and $F_2$ share at least two
	$1$-edges. Therefore, the subgraph induced by the vertices of $F_2$ has either the form:
	\[ \xymatrix {
	(\Phi_1, \Psi_1) \ar@{-}[d]_1 \ar@{.}@/^/[rr]^{010\cdots10} \ar@{.}@/_/[rr]_{212\cdots12} & & (\Phi_4, \Psi_4) \ar@{-}[d]^1 \\
	(\Phi_2, \Psi_2) \ar@{.}@/_/[rr]_{010\cdots10} \ar@{.}@/^/[rr]^{212\cdots12} & & (\Phi_3, \Psi_3)
	} \]
	or
	\[ \xymatrix {
	(\Phi_1, \Psi_1) \ar@{-}[dd]_1 \ar@{.}@/^/[rr]^{010\cdots10} \ar@{.}[dr] & & (\Phi_4, \Psi_4) \ar@{-}[dd]^1 \ar@{.}[dl]\\
	\ar @{} [rr]|{212\cdots12} & & \\
	(\Phi_2, \Psi_2) \ar@{.}@/_/[rr]_{010\cdots10} \ar@{.}[ur] & & \ar@{.}[ul] (\Phi_3, \Psi_3)
	} \]
	As before, this subgraph covers a corresponding subgraph in $\fgraph_{\calP}$ and one in
	$\fgraph_{\calQ}$. If every $\Phi_i$ is distinct, then we get a disconnected intersection
	in $\fgraph_{\calP}$, and if every $\Psi_i$ is distinct, then we get a disconnected intersection in
	$\fgraph_{\calQ}$. Since $\calP$ and $\calQ$ are both polyhedra, this cannot happen, 
	and so there must be some identification of flags $\Phi_i$ and flags $\Psi_i$. 
	Now, any closed walk along edges labeled 0
	and 1 (or along edges labeled 1 and 2) must have even length (in the flag graph of a polyhedron), 
	so we cannot identify flags that are separated by an odd walk. This rules out the second subgraph
	above. The only possible identification in the first subgraph is 
	$\Phi_1 = \Phi_3$, which also forces $\Phi_2 = \Phi_4$.
	For the same reason, we must have $\Psi_1 = \Psi_3$. But then $(\Phi_1, \Psi_1) = (\Phi_3, \Psi_3)$, 
	and those vertices were supposed to be distinct. So no identification is possible, and
	$F_0 \cap F_2$ cannot be disconnected.
	\end{proof}

	Can we define the mix of polytopes that are not rooted? Perhaps the most natural definition is
	via the following universal property:
	
	\begin{definition}
	Let $\calP$ and $\calQ$ be $n$-polytopes. Suppose that every polytope that covers $\calP$ and $\calQ$
	also covers some maniplex $\calR$. Then $\calR$ is the \emph{mix of $\calP$ with $\calQ$}, denoted
	$\calR = \calP \mix \calQ$.
	\end{definition}

	The mix of polytopes, unlike the mix of \emph{rooted} polytopes, is not always well-defined. 
	We start with a simple result.

	\begin{proposition}
	\label{prop:rooted-mix}
	If $\calP \mix \calQ$ is well-defined, then it is the underlying polytope of $(\calP, \Phi) \mix
	(\calQ, \Psi)$ for some choice of flags $\Phi$ and $\Psi$.
	\end{proposition}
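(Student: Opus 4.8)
The plan is to work entirely with flag-stabilizers, where (by the covering proposition) a covering of rooted objects is nothing but containment of subgroups of $W$, and to unpack what ``$\calP \mix \calQ$ well-defined'' has to mean: the maniplex $\calR$ is the \emph{minimal common cover} of $\calP$ and $\calQ$, that is, $\calR$ covers both $\calP$ and $\calQ$ and is covered by every common cover, and it is the unique such maniplex. The one preliminary point to pin down is that $\calR$ really is a common cover. This is immediate from the minimal-common-cover reading: since every common cover covers $\calP$ (and $\calQ$) by definition, $\calP$ and $\calQ$ sit below the mix, so the minimal common cover $\calR$ must in turn cover $\calP$ and $\calQ$.

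With $\calR$ available as a common cover, I would produce the required flags by rooting it. Fix coverings $\varphi\colon\calR\covers\calP$ and $\psi\colon\calR\covers\calQ$, choose any flag $\Theta$ of $\calR$, and set $\Phi=\varphi(\Theta)$ and $\Psi=\psi(\Theta)$. Then $(\calR,\Theta)\covers(\calP,\Phi)$ and $(\calR,\Theta)\covers(\calQ,\Psi)$, so by the universal property of the mix of rooted polytopes, $(\calR,\Theta)\covers(\calP,\Phi)\mix(\calQ,\Psi)$; write $\calM$ for the underlying maniplex of the latter. This gives one direction, $\calR\covers\calM$. For the reverse, observe that $\calM$ is itself a common cover of $\calP$ and $\calQ$, and that any polytope common cover $\calX$ satisfies $\calX\covers\calR\covers\calM$ by composing coverings; hence $\calM$ is a common cover that is covered by every polytope common cover, which is exactly the property defining the mix. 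By well-definedness (uniqueness of the minimal common cover) I would conclude $\calM\cong\calR$, which is the assertion. Equivalently, in stabilizer terms, with $R=\Stab_W(\Theta)$ one has $R\le \Stab_W(\Phi)\cap\Stab_W(\Psi)$, and minimality forces equality, so $\calR\cong\calU\big(\Stab_W(\Phi)\cap\Stab_W(\Psi)\big)=(\calP,\Phi)\mix(\calQ,\Psi)$.

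The hard part will be the interaction with polytopality. The rooted mix $\calM$ is only guaranteed to be a \emph{maniplex}, not a polytope, so the defining universal property --- phrased in terms of \emph{polytopes} covering $\calP$ and $\calQ$ --- does not, on its face, certify $\calM$ as a competitor that uniqueness can collapse onto $\calR$. The way around this is to notice that the direction I actually need does not require $\calM$ to be polytopal: $\calM$ only has to be \emph{covered by} every polytope common cover, and this it inherits from $\calR$ through the composition $\calX\covers\calR\covers\calM$. Put differently, if one had the strict containment $R\subsetneq\Stab_W(\Phi)\cap\Stab_W(\Psi)$, then $\calU\big(\Stab_W(\Phi)\cap\Stab_W(\Psi)\big)$ would be a strictly smaller common cover still covered by every polytope common cover, contradicting the uniqueness built into well-definedness. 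Making this last step airtight --- reconciling the definition's restriction to polytope covers with a possibly non-polytopal $\calM$, and confirming the competing object is covered by \emph{all} polytope common covers rather than merely some --- is where the argument demands the most care, and it is essentially the reason the unrooted mix can fail to be well-defined in the first place.
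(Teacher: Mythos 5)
Your argument is correct and follows essentially the same route as the paper: root $\calR$ at a flag, take its images $\Phi$ and $\Psi$ under the two coverings, use the universal property of the rooted mix to get $\calR \covers (\calP,\Phi)\mix(\calQ,\Psi)$, and then use the defining property of $\calP\mix\calQ$ for the reverse covering. The paper closes the second direction slightly more directly --- it applies the universal property of $\calR$ to the rooted mix viewed as a common cover of $\calP$ and $\calQ$, obtaining mutual covering --- and it glosses over the polytopality caveat that you rightly flag.
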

	
	\begin{proof}
	Let $\calR = \calP \mix \calQ$. Since $\calR$ covers $\calP$ and $\calQ$, then it must cover
	the underlying polytope of $(\calP, \Phi) \mix (\calQ, \Psi)$ for some choice of $\Phi$
	and $\Psi$, both images of the same flag of $\calR$ under the quotients. Conversely, the underlying polytope of $(\calP, \Phi) \mix (\calQ, \Psi)$
	covers both $\calP$ and $\calQ$, and thus it covers $\calR$.
	\end{proof}

	Suppose that $\calP$ and $\calQ$ are chiral polytopes such that neither
	covers the other. By \pref{rooted-mix}, the mix of $\calP$ with $\calQ$ could only
	be the underlying polytope of $(\calP, \Phi) \mix (\calQ, \Psi)$ or $(\calP, \Phi^0) \mix
	(\calQ, \Psi)$. (The other two essentially different choices of flags yield the same two
	polytopes as these two choices.)
	In general, however, these two polytopes will be incomparable (i.e., neither
	will cover the other), and the mix of $\calP$ with $\calQ$ will be undefined.
	
	For example, consider the torus maps $\{4, 4\}_{(b, c)}$, which can be thought of as
	the quotient of the plane tiling by squares by the translation subgroup generated by
	translations by $(b, c)$ and $(-c, b)$ (see \cite[Sec. 1D]{arp}). These are chiral
	when $bc(b-c) \neq 0$, and in that case should be more properly thought of as rooted
	polytopes. Let $\{4, 4\}_{[b, c]}$ denote the underlying polytope of $\{4, 4\}_{(b,c)}$.
	If $\calP = \{4, 4\}_{[1,2]}$ and $\calQ = \{4, 4\}_{[1,4]}$, then
	both $\{4, 4\}_{[6,7]}$ and $\{4, 4\}_{[2,9]}$ are minimal covers of $\calP$ and
	$\calQ$, neither of which covers the other.

	Similar problems happen when we mix any two polytopes with the same symmetry type graph.
	On the other hand, if two polytopes have symmetry type graphs that are sufficiently different,
	then we are guaranteed a well-defined mix.

	\begin{proposition}
	\label{prop:connected-mix}
	Let $\calP$ and $\calQ$ be $n$-polytopes. Let $\Phi$ and $\Phi'$ be flags of $\calP$ in flag
	orbits ${\mathcal O}_1$ and ${\mathcal O}_1'$, and let
	$\Psi$ and $\Psi'$ be flags of $\calQ$ in flag orbits ${\mathcal O}_2$ and ${\mathcal O}_2'$. 
	Then there is a path from $({\mathcal O}_1, {\mathcal O}_2)$ to $({\mathcal O}_1', {\mathcal O}_2')$
	in $T(\calP) \mix T(\calQ)$ if and only if 
	there is an element $w \in W$ such that $w^{-1} \Stab_W(\Phi) w = \Stab_W(\Phi')$ and
	$w^{-1} \Stab_W(\Psi) w = \Stab_W(\Psi')$.
	\end{proposition}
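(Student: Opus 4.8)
The plan is to reduce the statement to a purely group-theoretic fact about the diagonal conjugation action of $W$ on pairs of subgroups, using the dictionary between flag orbits and flag-stabilizers supplied by \pref{flag-action-properties}(d). First I would identify each vertex of $T(\calP)$ with the common flag-stabilizer of the flags in the corresponding orbit; this identification is legitimate precisely because two flags lie in the same orbit if and only if they have the same stabilizer. Under it, the earlier remark that ``two vertices of $T(\calP)$ are connected by an $i$-edge if and only if the corresponding stabilizers are conjugate by $r_i$'' says exactly that an $i$-edge joins a stabilizer $A$ to $r_i A r_i$. Consequently, writing $S = \Stab_W(\Phi)$ and $S' = \Stab_W(\Psi)$, I would identify the vertices of $T(\calP) \mix T(\calQ)$ with pairs of flag-stabilizers, and observe that by the definition of the mix of graphs an $i$-edge in $T(\calP) \mix T(\calQ)$ is exactly a \emph{simultaneous} conjugation by $r_i$, sending $(A, B)$ to $(r_i A r_i, r_i B r_i)$.

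With this setup the two directions are a bookkeeping computation. For the forward direction, a walk in $T(\calP) \mix T(\calQ)$ from $(\mathcal O_1, \mathcal O_2)$ with successive edge labels $i_1, \ldots, i_m$ ends at the pair obtained by conjugating both coordinates by $w := r_{i_1} \cdots r_{i_m}$, that is, at $(w^{-1} S w, w^{-1} S' w)$; since two vertices of a graph are joined by a path if and only if they are joined by a walk, the endpoint $(\mathcal O_1', \mathcal O_2')$ forces $w^{-1} \Stab_W(\Phi) w = \Stab_W(\Phi')$ and $w^{-1} \Stab_W(\Psi) w = \Stab_W(\Psi')$. Conversely, given such a $w$, I would write $w = r_{i_1} \cdots r_{i_m}$ as a word in the generators of $W$ and read this word as a sequence of edge labels; each partial conjugate is again a conjugate of a genuine flag-stabilizer, hence a legitimate vertex of the mix, and consecutive pairs differ by simultaneous conjugation by a single $r_{i_j}$, so the word traces out an honest walk from $(\mathcal O_1, \mathcal O_2)$ to $(\mathcal O_1', \mathcal O_2')$.

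The substance of the argument is therefore the observation that connectivity in $T(\calP) \mix T(\calQ)$ coincides with membership in a single orbit of the diagonal conjugation action of $W$ on pairs of flag-stabilizers, together with the fact that $W$ is generated by the $r_i$ (so that every conjugating element is realized by some walk, and every walk realizes a conjugating element). I do not expect a serious obstacle; the only points requiring care are keeping the direction and order of the conjugation consistent between the left-hand side (edge labels read along a walk) and the right-hand side (the single element $w$), and confirming that each intermediate vertex produced in the backward direction really is a vertex of the mix. Both are handled by the identification of orbits with stabilizers from \pref{flag-action-properties}(d) and by the closure of the set of flag-stabilizers under $W$-conjugation.
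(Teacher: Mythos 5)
Your argument is correct and is essentially the same as the paper's: the paper's proof is a one-line appeal to the fact that an $i$-edge in a symmetry type graph corresponds to conjugation of the flag-stabilizers by $r_i$, and your proposal simply spells out the bookkeeping (composing conjugations along a walk, and factoring $w$ into generators for the converse) that the paper leaves implicit. No gap.
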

	
	\begin{proof}
	This follows easily from the fact that an $i$-edge connects two vertices of $T(\calP)$ if
	and only if the corresponding flag-stabilizers are conjugate by $r_i$.
	\end{proof}
	
	\begin{proposition}
	\label{prop:well-defd-mix}
	Let $\calP$ and $\calQ$ be $n$-polytopes. If $T(\calP) \mix T(\calQ)$ is connected, then
	$\calP \mix \calQ$ is well-defined.
	\end{proposition}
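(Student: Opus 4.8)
The plan is to use the connectivity hypothesis to force every rooted mix $(\calP, \Phi') \mix (\calQ, \Psi')$ to have the \emph{same} underlying polytope $\calR$, and then to check directly that this common $\calR$ is the minimal maniplex covered by every common cover of $\calP$ and $\calQ$, which is exactly what it means for $\calP \mix \calQ$ to be well-defined.

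First I would reduce to a finite list of subgroups. Since $\calP$ is a $k$-orbit polytope, as $\Phi'$ ranges over all flags of $\calP$ the stabilizer $\Stab_W(\Phi')$ takes exactly the $k$ distinct values $S_1, \ldots, S_k$, one per flag orbit (by \pref{flag-action-properties}(d)), and likewise $\Stab_W(\Psi')$ takes finitely many values $S_1', \ldots, S_m'$. Because flags in a common orbit share a stabilizer, the rooted mix $(\calP, \Phi') \mix (\calQ, \Psi') = \calU(\Stab_W(\Phi') \cap \Stab_W(\Psi'))$ depends only on the orbit pair, and is governed by a subgroup of the form $S_i \cap S_j'$. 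Its underlying polytope is determined by the $W$-conjugacy class of $S_i \cap S_j'$: conjugation by $w \in W$ is an automorphism of $\G(\calU) = W$ and carries $\calU/R$ isomorphically onto $\calU/(w^{-1} R w)$.

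The key step is to show that all the subgroups $S_i \cap S_j'$ are conjugate in $W$. Two orbit pairs correspond to two vertices $({\mathcal O}_1, {\mathcal O}_2)$ and $({\mathcal O}_1', {\mathcal O}_2')$ of $T(\calP) \mix T(\calQ)$, say with stabilizer pairs $(S_a, S_b')$ and $(S_c, S_d')$; connectivity supplies a path between these vertices, and by \pref{connected-mix} this path yields a \emph{single} element $w \in W$ with $w^{-1} S_a w = S_c$ and $w^{-1} S_b' w = S_d'$ simultaneously. Intersecting gives $w^{-1}(S_a \cap S_b') w = S_c \cap S_d'$, so the two intersections are conjugate and the corresponding rooted mixes share an underlying polytope. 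Hence there is a single polytope $\calR$ underlying every $(\calP, \Phi') \mix (\calQ, \Psi')$.

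Finally I would verify the universal property. Each rooted mix covers its two factors, so $\calR \covers \calP$ and $\calR \covers \calQ$. Conversely, if a polytope $\calX$ covers both $\calP$ and $\calQ$, pick a flag $\Xi$ of $\calX$ and let $\Phi'$, $\Psi'$ be its images under the two coverings; then $(\calX, \Xi)$ covers $(\calP, \Phi')$ and $(\calQ, \Psi')$, so by the universal property of the rooted mix it covers $(\calP, \Phi') \mix (\calQ, \Psi')$, whence $\calX \covers \calR$. Thus $\calR$ is a common cover dominated by every common cover, i.e. the minimal one, so $\calP \mix \calQ = \calR$ is well-defined, consistent with \pref{rooted-mix}. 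The only real obstacle is the simultaneity in the key step: the separate conjugacies $S_a \sim S_c$ and $S_b' \sim S_d'$ would not suffice, and it is precisely the connectivity of the \emph{product} graph $T(\calP) \mix T(\calQ)$---rather than of the two factors individually---that guarantees a single $w$ works for both, via \pref{connected-mix}.
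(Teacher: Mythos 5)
Your proof is correct and follows essentially the same route as the paper's: both invoke \pref{connected-mix} together with connectivity of $T(\calP) \mix T(\calQ)$ to produce a single $w \in W$ conjugating both stabilizers simultaneously, conclude that all rooted mixes share one underlying polytope $\calR$, and then deduce the universal property. Your version just spells out more explicitly the reduction to orbit pairs and the final verification that every common cover of $\calP$ and $\calQ$ covers $\calR$, which the paper leaves as a one-line remark.
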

	
	\begin{proof}
	Consider flags $\Phi$ and $\Phi'$ of $\calP$ and flags $\Psi$ and $\Psi'$ of $\calQ$. 
	Since $T(\calP) \mix T(\calQ)$ is connected, \pref{connected-mix} implies that
	$\Stab_W(\Phi) \cap \Stab_W(\Psi)$ is conjugate to $\Stab_W(\Phi') \cap \Stab_W(\Psi')$.
	It follows that the underlying polytope of $(\calP, \Phi) \mix (\calQ, \Psi)$ is the same as
	the underlying polytope of $(\calP, \Phi') \mix (\calQ, \Psi')$. 
	Since the flags were arbitrary, we see that all of the rooted mixes have the same
	underlying polytope $\calR$. Therefore, every polytope that covers $\calP$ and $\calQ$
	also covers $\calR$, and so $\calR = \calP \mix \calQ$.
	\end{proof}

	\begin{corollary}
	If $\calP$ and $\calQ$ are $n$-polytopes and $\calP$ is regular, then $\calP \mix \calQ$
	is well-defined.
	\end{corollary}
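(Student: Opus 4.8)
The plan is to reduce the statement to \pref{well-defd-mix}, which tells us that it suffices to prove that $T(\calP) \mix T(\calQ)$ is connected. The guiding idea is that when $\calP$ is regular its symmetry type graph is as small as possible, so that mixing with it does nothing to destroy the connectivity of $T(\calQ)$.

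First I would record the shape of $T(\calP)$. Since $\calP$ is regular, $\G(\calP)$ acts transitively on the flags, so $T(\calP)$ has a single vertex $v$; and for every $i \in \{0, \ldots, n-1\}$ the flags $\Phi$ and $\Phi^i$ lie in the same (unique) orbit, so $v$ carries a semi-edge labeled $i$. In particular, $v$ is $i$-adjacent to itself for every label $i$.

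Next I would compute $T(\calP) \mix T(\calQ)$ directly from the definition of the mix of symmetry type graphs. Its vertex set is $\{v\} \times V(T(\calQ))$, which I identify with $V(T(\calQ))$. By definition of the mix, there is an $i$-edge between $(v, O)$ and $(v, O')$ exactly when there is an $i$-edge from $v$ to $v$ in $T(\calP)$ and an $i$-edge from $O$ to $O'$ in $T(\calQ)$. The first condition holds for every $i$ by the semi-edge observation, so the $i$-edges of the mix are in bijection with the $i$-edges of $T(\calQ)$. Hence $T(\calP) \mix T(\calQ) \cong T(\calQ)$ as edge-labeled graphs.

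Finally, $T(\calQ)$ is connected, being the image of the connected graph $\fgraph_{\calQ}$ under the quotient map by $\G(\calQ)$ (a surjective color-preserving graph homomorphism preserves connectivity, and $\fgraph_{\calQ}$ is connected because $\calQ$ is strongly flag-connected). Therefore $T(\calP) \mix T(\calQ)$ is connected, and \pref{well-defd-mix} gives that $\calP \mix \calQ$ is well-defined. The only point that requires care---and the main (if modest) obstacle---is the bookkeeping around semi-edges: one must treat the semi-edge at $v$ as a genuine $i$-adjacency of $v$ to itself, so that mixing with $T(\calP)$ preserves, rather than deletes, the $i$-edges of $T(\calQ)$.
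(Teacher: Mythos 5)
Your proof is correct and is exactly the argument the paper leaves implicit: since $\calP$ is regular, $T(\calP)$ is a single vertex with a semi-edge of every label, so $T(\calP) \mix T(\calQ) \cong T(\calQ)$, which is connected, and \pref{well-defd-mix} applies. Your extra care about treating semi-edges as genuine $i$-adjacencies is the right (and only) delicate point.
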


\section{Constructions of $k$-orbit polytopes}
\subsection{Basic existence questions}
	
	There has been very little systematic study of $k$-orbit polytopes with $k \geq 3$. Many basic
	questions remain unanswered. Perhaps the most basic question is whether there are $k$-orbit $n$-polytopes
	for every $k \geq 1$ and $n \geq 3$. If we allow polytopes with digonal sections,
	then the answer is yes \cite[Theorem 5.2]{Illanit}. However, such polytopes are often
	considered degenerate. If we forbid such polytopes, then the problem remains open. 
	
	\begin{problem}\label{p:OrbitsRank}
	Are there $k$-orbit $n$-polytopes for every $k \ge 1$ and every $n \ge 3$ such that no section of rank $2$ is a digon?
	\end{problem}

	The answer seems likely to be yes. Here is a construction that may serve as a foundation for answering
	this question.
	
	\begin{proposition}
	\label{prop:cube-stack}
	For every odd $k \geq 5$, there is a $k$-orbit polyhedron such that no section of rank $2$ is a digon.
	\end{proposition}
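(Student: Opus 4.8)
The plan is to exhibit an explicit family of polyhedra $\calP_m$ obtained by stacking $m$ cubes into a square column and taking the boundary surface; equivalently, $\calP_m$ is the boundary map of a $1 \times 1 \times m$ box in which each of the four long rectangular faces has been subdivided into $m$ unit squares. One checks directly that $\calP_m$ is a genuine polyhedron (the diamond condition holds because every edge lies in two faces, every vertex--face incidence spans two edges, and the map is strongly flag-connected). I will show that $\calP_m$ has exactly $2m+1$ flag orbits; taking $m = (k-1)/2 \geq 2$ then realizes every odd $k \geq 5$.

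First I would record the combinatorics. The complex has $4(m+1)$ vertices (four at each of the $m+1$ horizontal levels), $4(m+1) + 4m = 8m+4$ edges (horizontal rings together with vertical rungs), and $4m+2$ faces (two square caps and $4m$ square side panels); the Euler count $V - E + F = 2$ confirms a sphere. Since every flag of a polyhedron is a triple $(v,e,f)$ with $v \in e \in f$, the number of flags is $4E = 32m+16$.

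Next I would pin down $\G(\calP_m)$. Geometrically the square column has symmetry group $D_{4h}$ of order $16$, namely the dihedral group of the square cross-section together with the top-to-bottom flip, and all of these act as automorphisms of $\calP_m$. Because $\G(\calP_m)$ acts freely on flags, the number of flag orbits equals $|\mathscr F(\calP_m)| / |\G(\calP_m)| = (32m+16)/16 = 2m+1$, \emph{provided} $|\G(\calP_m)| = 16$. It remains to verify the digon condition: the rank-$2$ sections of a polyhedron are exactly its $2$-faces and its vertex-figures; every $2$-face of $\calP_m$ is a square, and every vertex has degree $3$ (the eight corners) or degree $4$ (the interior ring vertices), so every vertex-figure is a triangle or a square. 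Hence no rank-$2$ section is a digon.

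The main obstacle is showing $|\G(\calP_m)| = 16$, i.e. that $\calP_m$ has no combinatorial automorphisms beyond the geometric ones. For $m=1$ this fails, since the cube acquires the extra octahedral symmetries, so the argument must genuinely use $m \geq 2$. I would argue by rigidity: an automorphism preserves vertex degree, hence permutes the eight degree-$3$ corners among themselves and the $4(m-1)$ degree-$4$ interior vertices among themselves. For $m \geq 2$ the two caps are the only $2$-faces all of whose vertices have degree $3$, so the caps are permuted as a block; this forces the four side columns to be permuted cyclically while the linear height-ordering within each column is rigid up to the global flip. Tracking these constraints confines $\G(\calP_m)$ to the $16$ geometric symmetries and closes the orbit count.
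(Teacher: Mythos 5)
Your proposal is correct and follows essentially the same route as the paper: the same stack of $m$ cubes, the same counts ($4m+4$ vertices, $8m+4$ edges, $4m+2$ faces, hence $32m+16$ flags), and the same conclusion of $2m+1$ orbits with $m=(k-1)/2$. The only difference is cosmetic: where you pin down $|\G(\calP_m)|=16$ by a direct rigidity argument on vertex degrees, the paper simply observes that the $16$ cap flags form a single complete flag orbit (the caps being combinatorially distinguishable from the side faces when $m\ge 2$) and then invokes freeness of the flag action to conclude that every orbit has exactly $16$ flags.
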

	
	\begin{proof}
	Consider $m \geq 2$ identical cubes in a single stack. Keep the edges and vertices that form the seams where
	one cube meets another, so that we get a polyhedron with $4m+4$ vertices, $8m+4$ edges, and
	$4m+2$ faces. (We could even change this to be a convex polyhedron by dilating the seams
	by different factors, effectively replacing the cubes with truncated pyramids.) It is clear that
	the flags that contain the top face and those that contain the bottom face are in the same
	orbit, and that no other flag is in that orbit. That gives us $16$ flags in that orbit, and since
	there are $8m+4$ edges, there are $32m+16$ flags total. This gives us $2m+1$ flag orbits.
	Finally, no face or vertex-figure is a digon.
	\end{proof}

	There are several minor modifications to \pref{cube-stack} that could be further applied to
	Problem~\ref{p:OrbitsRank}. For example, we could stack $n$-cubes, or cap off
	the stack with a pyramid at one or both ends. It seems that this alone will not be enough
	to settle the question in higher ranks, but perhaps just a little further expansion is needed.

	One way to specialize Problem~\ref{p:OrbitsRank} is to preassign the automorphism group.
	A group $\Gamma$ is the automorphism group of a regular $n$-polytope if and only if $\Gamma$ is a
	string C-group of rank $n$ (with respect to some set of generators). More generally, $\Gamma$ is the
	automorphism group of a reflexible maniplex if and only if $\Gamma$ is a string group generated by
	involutions. Finding a generalization of these principles to $k$-orbit
	polytopes would be useful, but seems quite difficult. 

	\begin{problem}
	Given a group $\Gamma$, what are the numbers $k \ge 1$ and $n \ge 3$ such that $\Gamma$ is the automorphism group of a $k$-orbit polytope or maniplex of rank $n$?
	\end{problem}

	Two nice partial results are already known. By \cite[Theorem 5.2]{Illanit}, if $\Gamma$ is a string C-group of rank $n$, then it is the automorphism group of a $k$-orbit maniplex of rank $n$ for every $k$. In fact, if $\Gamma$ satisfies
	some mild conditions, then it is actually the automorphism group of a polytope. In a different direction,
	it was proved in \cite{PreasGroup} that every group is the automorphism group of some abstract polytope. To the authors' knowledge, not much more is known.

	Specializing Problem \ref{p:OrbitsRank} in a different way, we can ask whether there are any maniplexes
	with a given symmetry type graph. There are a couple of obvious necessary conditions for a properly edge-colored $n$-regular graph with colors in $\{0,\dots, n-1\}$ to be the
	symmetry type graph of a maniplex: it must be connected, and for every $i$ and $j$ such that $|i - j| > 1$,
	walking along edges colored $i, j, i, j$ always gets us back to where we started. We shall call a graph {\em allowable} if it satisfies these properties. No other general restrictions
	on symmetry type graphs are known.
	
	\begin{problem}\label{p:allowableTypeGraph}
	Is every allowable graph the symmetry type graph of some maniplex?
	\end{problem}

	The answer to the previous problem is still not known even for two-orbit maniplexes (that is, for graphs with only two vertices). A few partial results are known, such as Theorem \ref{t:2ToTheP} below, obtained thanks to the construction $2^{\mathcal{P}}$.

	The polytope $2^{\mathcal{P}}$ was originally constructed by Danzer in \cite{Danzer1}. Given a {\em vertex-describable} $n$-polytope $\mathcal{P}$, (a polytope with the property that no two faces have the same sets of vertices), the polytope $2^{\mathcal{P}}$ is an $(n+1)$-polytope whose vertex-figures are all isomorphic to $\mathcal{P}$. If $\mathcal{P}$ has $m$ vertices then $2^{\mathcal{P}}$ has $2^m$ vertices, justifying the name. Danzer's construction is purely combinatorial, and consists of defining the $2^m$ vertices of $2^{\mathcal{P}}$ and then defining the remaining faces by determining their vertex sets with the partial order given by inclusion. In particular, $2^{\mathcal{P}}$ is also vertex-describable. The drawback of this construction is that it requires $\mathcal{P}$ to be not only a polytope, but also vertex-describable. 

	Later group-theoretical constructions of the polytope $2^{\mathcal{P}}$ were given in \cite[Chapter 8]{arp} through a twisting operation, and in \cite{pellicer-extension} using permutation groups. In both cases $\mathcal{P}$ no longer needs to be vertex-describable, but it is required to be regular.

	Finally, in \cite{products}, the dual of the following alternative definition of $2^{\mathcal{P}}$ is given. Let $\mathcal{P}$ be an $n$-maniplex with set of flags $\Omega$ and with vertices labeled $1, \dots, m$. We define a maniplex $2^{\calP}$ with vertex-set $\mathbb{Z}_2^m$ and flag set $\Omega \times \mathbb{Z}_2^m$, where
	\[(\Phi,x)^i = \left\{\begin{array}{ll}
	(\Phi^i,x) & \mbox{if $i\ge 1$},\\
	(\Phi,x^{j}) & \mbox{if $i = 0$},
	\end{array} \right.\]
	where $j$ is the vertex that $\Phi$ contains, and where $x^j$ differs from $x$ precisely in the $j^{th}$ entry. This definition does not require $\mathcal{P}$ to be vertex-describable, a polytope, or symmetric in any sense. Furthermore, it is easy to show that if $\mathcal{P}$ is a polytope then so is $2^{\mathcal{P}}$.

	For every $y \in \mathbb{Z}_2^m$, the maniplex $2^{\mathcal{P}}$ has symmetries $\tau_y$ that map $(\Phi,x)$ to $(\Phi,x+y)$. In particular, $(\Phi, x)$ and its $0$-adjacent flag $(\Phi,x^j)$ always belong to the same orbit. This shows that the symmetry type graph of $2^{\mathcal{P}}$ has semi-edges labelled $0$ at every node and that $2^{\mathcal{P}}$ is vertex-transitive.

	Given an automorphism $\gamma$ of $\mathcal{P}$ and a vertex $x = (x_1, \dots, x_m)$ of $2^{\mathcal{P}}$, we denote by $x\gamma$ the vector $(x_{(1)\gamma^{-1}}, \dots, x_{(m)\gamma^{-1}})$. Then the function $\hat{\gamma}$ mapping $(\Phi,x)$ to $(\Phi\gamma,x\gamma)$ induces an automorphism of $2^{\mathcal{P}}$. This shows that the stabilizer of the base vertex $(0,\dots,0)$ contains the automorphism group of $\mathcal{P}$ as a subgroup. Since the vertex-figure at $(0,\dots,0)$ is isomorphic to $\mathcal{P}$, the stabilizer of this vertex must in fact be isomorphic to the full automorphism group of $\mathcal{P}$. Then since $2^{\mathcal{P}}$ is vertex-transitive, it follows that $\mathcal{P}$ and $2^{\mathcal{P}}$ have the same number of flag orbits. Moreover, two flag orbits of $\mathcal{P}$ (say, the ones containing $\Phi$ and $\Phi^i$) are connected by an $i$-edge
	in $T(\calP)$ if and only if the corresponding flag orbits of $2^{\mathcal{P}}$ (that is, the ones containing $(\Phi,(0,\dots,0))$ and $(\Phi^i,(0,\dots,0))$) are connected by an $(i+1)$-edge in $T(2^{\calP})$.

	The previous analysis shows that the symmetry type graph of $2^{\mathcal{P}}$ can be obtained from that of $\mathcal{P}$ by increasing every label by $1$ and adding a semi-edge with label $0$ at each vertex. By considering $(2^{\mathcal{P}^{*}})^{*}$ we obtain the following theorem. (Recall that $\mathcal{P}^{*}$ denotes the dual of $\mathcal{P}$.) 

	\begin{theorem}\label{t:2ToTheP}
	Let $\fgraph$ be an allowable symmetry type graph of rank $n$ and assume that there is an $(n-1)$-maniplex (resp. $n$-polytope) whose symmetry type graph is $\fgraph$. Then there is an $n$-maniplex (resp. $(n+1)$-polytope) whose symmetry type graph is the graph $\hat{\fgraph}$ constructed from $\fgraph$ by attaching semi-edges labelled $n$ to all vertices.
	\end{theorem}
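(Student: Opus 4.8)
The plan is to combine two facts, exactly as the passage preceding the statement suggests: the effect of the $2^{(\cdot)}$ construction on symmetry type graphs, already worked out above, and the effect of duality. Recall we have established that for any maniplex $\mathcal{Q}$, the graph $T(2^{\mathcal{Q}})$ is obtained from $T(\mathcal{Q})$ by raising every edge label by $1$ and attaching a semi-edge labelled $0$ at each vertex; moreover $2^{(\cdot)}$ raises the rank by one and takes polytopes to polytopes. The only additional ingredient is the elementary observation that passing to the dual reverses the labels: if $\mathcal{M}$ has label set $\{0,\dots,p\}$, then $T(\mathcal{M}^{*})$ is obtained from $T(\mathcal{M})$ by replacing each label $i$ with $p-i$ (this is immediate from the fact that dualizing sends a rank-$i$ face to a rank-$(p-i)$ face, hence $i$-adjacency to $(p-i)$-adjacency), and $\mathcal{M}^{*}$ is a polytope whenever $\mathcal{M}$ is.

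First I would fix an $(n-1)$-maniplex (resp. $n$-polytope) $\mathcal{P}$ with $T(\mathcal{P})=\fgraph$, so that $\fgraph$ carries labels $\{0,\dots,n-1\}$, and form the dual $\mathcal{P}^{*}$, whose symmetry type graph is $\fgraph$ with each label $i$ replaced by $(n-1)-i$. Applying the $2^{(\cdot)}$ construction to $\mathcal{P}^{*}$ produces an $n$-maniplex (resp. $(n+1)$-polytope) $2^{\mathcal{P}^{*}}$ with label set $\{0,\dots,n\}$, whose symmetry type graph has a semi-edge labelled $0$ at every vertex while, for $1\le k\le n$, its label-$k$ edges are precisely the label-$((n-1)-(k-1))=(n-k)$ edges of $\fgraph$.

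Finally I would dualize once more. Since $2^{\mathcal{P}^{*}}$ has top label $n$, the dual $(2^{\mathcal{P}^{*}})^{*}$ replaces each label $j$ by $n-j$. The semi-edge at label $0$ thereby becomes a semi-edge labelled $n$ at every vertex, and for each $\ell\in\{0,\dots,n-1\}$ the label-$\ell$ edges of $(2^{\mathcal{P}^{*}})^{*}$ are the label-$(n-k)$ edges of $\fgraph$ with $n-k=\ell$, that is, exactly the label-$\ell$ edges of $\fgraph$. So the two label reversals cancel on the original $n$ labels while carrying the newly created semi-edge from label $0$ to label $n$, giving $T\big((2^{\mathcal{P}^{*}})^{*}\big)=\hat{\fgraph}$. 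Because both $2^{(\cdot)}$ and duality preserve the maniplex (resp. polytope) property, $(2^{\mathcal{P}^{*}})^{*}$ is the required $n$-maniplex (resp. $(n+1)$-polytope).

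The argument is essentially bookkeeping, so I expect no conceptual obstacle; the one place to be careful is tracking labels through the composition of the $+1$ shift induced by $2^{(\cdot)}$ and the two reversals, together with the rank shift, to confirm that the original labels of $\fgraph$ return to their starting values. The allowability hypothesis on $\fgraph$ plays no essential role beyond ensuring $\hat{\fgraph}$ is a meaningful target, since the explicit object $(2^{\mathcal{P}^{*}})^{*}$ is a genuine maniplex and hence automatically has an allowable symmetry type graph.
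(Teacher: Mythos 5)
Your proposal is correct and follows essentially the same route as the paper: the paper's entire proof is the remark that one should consider $(2^{\mathcal{P}^{*}})^{*}$, relying on the preceding analysis of how $2^{(\cdot)}$ transforms symmetry type graphs and adds a semi-edge labelled $0$. Your explicit tracking of the labels through the two dualizations and the shift by $1$ simply fills in the bookkeeping that the paper leaves implicit.
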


	If we allow degenerate polytopes (with digonal sections), we can obtain the same result in a simpler way. If $\calQ$ is the trivial
	extension of an $n$-polytope $\calP$ (consisting of two copies of $\calP$ ``glued back-to-back''; see
	Section 2.1), then we can obtain
	$T(\calQ)$ from $T(\calP)$ by attaching semi-edges labeled $n$ to each vertex. If we look at $T(\calQ^{*})$
	instead, then we get $T(\calP)$, with every label increased by 1, and with semi-edges labeled 0 at each vertex ---
	the same as we saw with $2^{\calP}$.
	
	By iterating the process of repeatedly taking either the construction $2^{\calP}$ or the trivial
	extension and dualizing, we can shift the labels of $T(\calP)$ by any desired amount. Similarly, by taking the dual of $2^{\calP*}$ or the trivial extension we can construct an $(n+1)$-maniplex from an $n$-maniplex $\calP$, whose symmetry type graph is that of $\calP$ with the addition of semi-edges with label $n$ at every vertex. Iterating this process we can construct maniplexes whose symmetry type graph is obtained by adding semi-edges at every vertex with labels greater than $n$ to the symmetry type graph of an $n$-maniplex.
	
	\begin{theorem} \label{thm:3-orbit}
	Every allowable symmetry type graph for three-orbit polytopes occurs as the symmetry type graph
	for a polytope.
	\end{theorem}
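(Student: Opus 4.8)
The plan is to classify the allowable three-orbit symmetry type graphs, reduce the higher-rank cases to rank $3$ using the extension constructions above, and then realize the finitely many rank-$3$ graphs directly.

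\emph{Classification.} Let $\fgraph$ be an allowable symmetry type graph on three vertices with colors in $\{0,\dots,n-1\}$. For each color $i$, the $i$-edges form a matching of the three vertices, so they are either three semi-edges (then $i$ is \emph{inactive}) or one genuine edge together with one semi-edge (then $i$ is \emph{active}, joining a well-defined pair of vertices). First I would note that the alternating-square condition forces any two active colors with $|i-j|>1$ to join the \emph{same} pair. A short count then shows that the active colors use at most two of the three pairs — three distinct pairs would require three labels pairwise at distance $1$ — and connectivity forces exactly two pairs, which share a common vertex. A further use of the alternating-square condition shows that one of the two pairs carries a single active color $a$ while the other carries active colors contained in $\{a-1,a+1\}$. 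Hence the active colors form a block of two or three \emph{consecutive} labels, and every inactive color lies strictly below or strictly above this block.

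\emph{Reduction to rank $3$.} Since the active block has size at most $3$, for $n\ge 4$ one of the extreme colors $0$ and $n-1$ is inactive. If color $n-1$ is inactive, then $\fgraph$ is obtained from an allowable three-orbit graph of rank $n-1$ by attaching semi-edges labelled $n-1$ at every vertex, so $\fgraph$ is realized by a polytope once the smaller graph is, by Theorem~\ref{t:2ToTheP}. If instead color $0$ is inactive, I would use the dual construction (which adjoins a $0$-labelled semi-edge) together with the label-shift operation to bring the active block to the bottom. Iterating, it suffices to treat the graphs whose active block occupies the bottom of the label set, namely the rank-$3$ cases.

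\emph{Base cases.} There are exactly three allowable three-orbit graphs in rank $3$: the one in which color $1$ joins one pair while colors $0$ and $2$ join a second pair; and the two in which a single extreme color is inactive and the remaining two colors join the two pairs through a common vertex. The latter two are realized by the triangular prism and its dual (cf.\ the prism example above), so the real work is the first graph. There the labels $0$ and $2$ play symmetric roles, so a realizing polyhedron must be self-dual, with vertex-, edge-, and face-orbit counts $(1,2,1)$. To construct it I would realize $\fgraph$ as a regular cover: assign a voltage in a suitable group $\G$ to each edge and semi-edge so that $\G$ acts freely on the covering flag graph with quotient $\fgraph$. The diamond condition forces the $0$- and $2$-labelled semi-edges (which meet at a common vertex) to carry distinct involutory voltages, and computing the two-colored residues then shows that every face and every vertex-figure has size a multiple of $3$; since a $\{3,3\}$ polytope must be the regular tetrahedron, these polygons are at least hexagons.

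The hard part is this final step: choosing $\G$ and the voltages so that the cover is connected, carries no extra automorphisms that would merge the three flag orbits, and satisfies the Component Intersection Property — so that the alternating $0$-$2$ squares are genuine $4$-cycles and the hexagonal faces and vertex-figures meet enough distinct vertices to be polytopal. Once such an explicit self-dual three-orbit polyhedron is exhibited, the classification and the rank-raising induction finish the proof.
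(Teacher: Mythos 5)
Your overall strategy coincides with the paper's: classify the allowable three-vertex symmetry type graphs, observe that each one is a shifted rank-$3$ graph padded with inactive colors, use the $2^{\calP}$/trivial-extension machinery to raise the rank and shift labels, and then realize the finitely many rank-$3$ base cases. Your hand classification (two active colors at distance at least $2$ must lie on the same pair of vertices, forcing the active colors to form a consecutive block of size $2$ or $3$) is correct and is a reasonable self-contained substitute for the paper's citation of \cite[Prop.~4.1]{stg}, and your reduction step is exactly the label-shifting argument in the remarks preceding the theorem.

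The gap is the third rank-$3$ base case: the graph in which colors $0$ and $2$ both join one pair of vertices and color $1$ joins the other. You never produce a polyhedron with this symmetry type graph. The voltage-graph discussion only derives necessary conditions (faces and vertex-figures of size divisible by $3$), and you explicitly defer the construction (``the hard part is this final step\dots once such a polyhedron is exhibited''). Since this is precisely the case not covered by the triangular prism and its dual, the theorem is not actually proved. The paper closes this case by citing \cite[Sec.~5]{k-orbit-maps}, where maps realizing every admissible rank-$3$ three-orbit symmetry type graph are exhibited; you would need either to invoke such a result or to complete an explicit construction. A smaller error: a polyhedron realizing this graph need not be self-dual. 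The graph is invariant under the relabelling $0 \leftrightarrow 2$, so the class of realizing polyhedra is closed under duality, but that only says the dual of any realizing polyhedron realizes the same graph, not that a realizing polyhedron is isomorphic to its own dual; imposing self-duality would needlessly restrict your search.
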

	
	\begin{proof}
	Proposition 4.1 in \cite{stg} shows the possible symmetry type graphs for three-orbit polytopes. We note that 
	every such graph can be seen as a `shifted' version of one of the allowable graphs for three-orbit polytopes
	in rank 3. Then in light of the preceding remarks, we only need the existence of polytopes for
	every symmetry type graph in rank 3. This is shown in \cite[Sec. 5]{k-orbit-maps}.
	\end{proof}


	The previous discussion leads to the following reinterpretation of Problem \ref{p:allowableTypeGraph}.

	\begin{problem}
	Are there constructions of $(n+1)$-polytopes $\mathcal{P}$ from $n$-polytopes where the symmetry type graph of $\mathcal{P}$ is different from those in Theorem \ref{t:2ToTheP} and from their duals?
	\end{problem}

	We can specialize Problem \ref{p:allowableTypeGraph} further by asking which symmetry type graphs occur
	among convex polytopes. In fact, this is really two problems, since we may look at the combinatorial
	symmetry type graph or the geometric one. Some results in \cite[Sec. 2.4]{kolya-thesis} provide additional
	necessary conditions for the geometric symmetry type graph of a convex polytope.
	
	\begin{problem}
	Which (combinatorial) symmetry type graphs occur among \emph{convex} polytopes? Which geometric symmetry type graphs occur?
	\end{problem}
	
\subsection{Amalgamations and extensions}

	It is a standard technique to construct an $n$-polytope by carefully putting together a family of $(n-1)$-polytopes. In the simplest case, the $(n-1)$-polytopes are all isomorphic. The problem of determining the possible $n$-polytopes
	we can assemble from copies of a single $(n-1)$-polytope is surprisingly deep, even in the convex setting.
	For example, there are only 8 convex polyhedra (up to similarity) whose faces are all equilateral triangles
	(see for example \cite{Deltahedra}). In general, if $\calP$ is an abstract polytope, we say that $\calR$ is
	an \emph{extension} of $\calP$ if every facet of $\calR$ is isomorphic to $\calP$.
	
	We may also require that we arrange the $(n-1)$-polytopes in the same way around every vertex, so that
	the vertex-figures will be isomorphic. This is of course much more restrictive, even if we do
	not prescribe the vertex-figure in advance. For example, the only convex polyhedra built out of
	equilateral triangles meeting the same way at each vertex are the tetrahedron (with triangular
	vertex-figures), the octahedron (with quadrangular vertex-figures), and the icosahedron
	(with pentagonal vertex-figures). In general, if $\calP$ and $\calQ$ are abstract polytopes, we say
	that $\calR$ is an \emph{amalgamation of $\calP$ and $\calQ$} if every facet of $\calR$ is isomorphic to $\calP$
	and every vertex-figure of $\calR$ is isomorphic to $\calQ$. The set of all amalgamations of
	$\calP$ and $\calQ$ is denoted by $\langle \calP, \calQ \rangle$.
	
	In order for $\langle \calP, \calQ \rangle$ to be nonempty, the polytopes $\calP$ and
	$\calQ$ must be compatible: every vertex-figure of $\calP$ must be isomorphic to a facet of $\calQ$.
	In general, this condition does not suffice. For example, even though hemi-cubes have triangular vertex-figures
	and icosahedra have triangular facets, there are no $4$-polytopes whose facets are hemi-cubes and whose vertex-figures are icosahedra (see \cite[Theorem 3.6]{LocProjective}). 
	
	One of the driving forces behind the amalgamation problem was Gr\"unbaum's problem, posed in
	\cite{GrunbaumToroidal}, of classifying the \emph{locally toroidal}
	polytopes. These are polytopes such that their facets and vertex-figures are either
	toroidal or spherical, but not both spherical. Motivated by this problem, amalgamations of regular and chiral polytopes have been previously addressed (see for example \cite[Chapter 4]{arp} and \cite{Census}). 
	See also \cite[Sec. 4]{polytope-problems} and \cite[Probs. 31-32]{chiral-problems} for an overview of what is known about amalgamations of regular and chiral polytopes. Amalgamations of other classes of polytopes
	remain largely unexplored.

	The vertex-figures of any polytope $\calR$ in $\langle \calP, \calQ \rangle$ are all isomorphic, but the vertex-figures of $\calP$ may not need to be all isomorphic. In principle there may be (for example) $4$-polytopes whose facets are all rhombic dodecahedra and whose vertex-figures are cuboctahedra. (The {\em cuboctahedron} is a polyhedron with 6 squares and 8 triangles such that two squares and two triangles meet at each of its 12 vertices in an alternating manner, as in Figure \ref{f:cuboctahedron} (a). The {\em rhombic dodecahedron} is the dual of the cuboctahedron; it has 12 quadrilateral faces meeting three of them on 8 of its vertices, and four of them in the remaining 6 vertices, as in Figure \ref{f:cuboctahedron} (b).) In this way the vertex-figures at some vertices of a given facet would correspond to triangles on their cuboctahedral vertex-figures in $\calR$, whereas the remaining vertex-figures at the same facet would correspond to squares. The amalgamation problem so far has been considered only in the context of regular and chiral polytopes, and to the authors' knowledge, no amalgamation has been found where the facets of the vertex-figures
	are of two or more types.
	
\begin{figure}
\begin{center}
\includegraphics{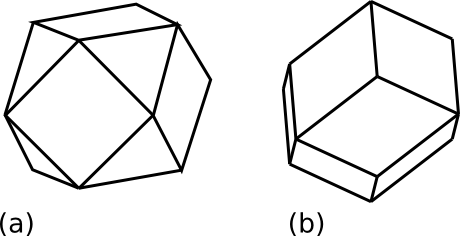}
\caption{Cuboctahedron and rhombic dodecahedron}\label{f:cuboctahedron}
\end{center}
\end{figure}

	\begin{problem}
	Are there polytopes $\calP$ and $\calQ$ such that $\langle \calP, \calQ \rangle$ is nonempty
	and $\calP$ has at least two non-isomorphic vertex-figures?
	\end{problem}

	An amalgamation may also be understood as a rooted object. That is, given a flag orbit $\mathcal{O}_1$ of $\calP$ and a flag orbit $\mathcal{O}_2$ of $\calQ$ we can consider only amalgamations $\calR$ of $\calP$ and $\calQ$ where each flag of $\calR$ that is in $\mathcal{O}_1$ when restricted to its facet, is in $\mathcal{O}_2$ when restricted to the vertex-figure; and where each flag of $\calR$ that is in $\mathcal{O}_2$ when restricted to its vertex-figure, it is in $\mathcal{O}_1$ when restricted to the facet. This was implicitly done in \cite{Census}, where polytopes $\calR$ in $\langle \{4,4\}_{(a,b)}, \{4,4\}_{(c,d)} \rangle$ and in $\langle \{4,4\}_{(a,b)}, \{4,4\}_{(d,c)} \rangle$ correspond to the two choices of flag orbits of the vertex-figure when preestablishing a flag of $\calR$ containing a given flag of the facet $\{4,4\}_{(a,b)}$.

	An example of an unrooted amalgamation of two polytopes can be found in \cite{UniformTilingsR3}, where tiling $\#12$ is a vertex-transitive nine-orbit tessellation of Euclidean space by triangular prisms. The vertex-figures are all isomorphic to a nine-orbit facet-transitive convex polyhedron with isosceles triangles as faces, so each flag orbit of the tiling corresponds to a flag orbit of the vertex-figures. The facets, being triangular prisms, have three flag orbits, and since the tiling itself has nine flag orbits, it follows that some flags that correspond to the same flag orbit in any given prism must belong to flags in distinct orbits of the tiling. So two flags that induce the same flag orbit of a facet may be paired with distinct flag orbits of a vertex-figure.



	Whenever $\calP$ and $\calQ$ are both regular and there is a regular amalgamation of $\calP$ and $\calQ$, then there is a {\em universal regular amalgamation} of $\calP$ and $\calQ$ (denoted $\{\calP, \calQ\}$) which covers any other regular amalgamation of $\calP$ and $\calQ$ (see \cite{amalgamation}). A similar result holds when $\calP$ or $\calQ$ is chiral (see \cite{chiral}). When both $\calP$ and $\calQ$ are chiral, the universal amalgamation is in fact rooted.
	For example, the universal amalgamation of $\{4,4\}_{(1,3)}$ with $\{4,4\}_{(1,3)}$ has 24 vertices, whereas the universal amalgamation of $\{4,4\}_{(3,1)}$ with $\{4,4\}_{(1,3)}$ has 50 vertices \cite{Census}.

	The existence of universal amalgamations of regular and chiral polytopes relies on the structures of their automorphism groups. It is not known if universal amalgamations exist in general.

	\begin{problem}
	If there is a rooted amalgamation of the polytopes $\calP$ and $\calQ$, then is there always a universal rooted 
	amalgamation of $\calP$ and $\calQ$?
	\end{problem}

	The following proposition provides a lower bound on the number of flag orbits of an amalgamation of $\calP$ and $\calQ$.

	\begin{proposition}\label{p:lowBoundOrbitAmalgam}
	Let $\calP$ be a $k_1$-orbit polytope and $\calQ$ be a $k_2$-orbit polytope. If $\calR$ is a $k_3$-orbit amalgamation of $\calP$ and $\calQ$ then $lcm(k_1,k_2)$ divides $k_3$.
	\end{proposition}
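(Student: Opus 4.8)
The plan is to reduce the divisibility $\operatorname{lcm}(k_1,k_2)\mid k_3$ to the two separate statements $k_1\mid k_3$ and $k_2\mid k_3$, which together are equivalent to it. I would isolate the core fact in a form that only refers to facets: \emph{if every facet of an $n$-polytope $\calR$ is isomorphic to a fixed $m$-orbit polytope, then $m$ divides the number of flag orbits of $\calR$.} Applying this to $\calR$ itself (whose facets are copies of the $k_1$-orbit polytope $\calP$) gives $k_1\mid k_3$, and applying it to the dual $\calR^{*}$ (whose facets are the vertex-figures of $\calR$, all isomorphic to the $k_2$-orbit polytope $\calQ$, and which has the same number of flag orbits $k_3$) gives $k_2\mid k_3$. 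So it suffices to prove the facet statement.

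To prove it, I would set up a restriction map on flags. Every flag $\Phi$ of $\calR$ contains a unique facet $F$, and the flags of $\calR$ containing $F$ are in bijection with the flags of the section $F/F_{-1}\cong\calP$; restricting $\Phi$ to this section and transporting along a chosen isomorphism gives a flag $\bar\Phi$ of $\calP$. The key claim is that $\Phi\mapsto[\bar\Phi]$ descends to a well-defined \emph{surjection} $r$ from the flag orbits of $\calR$ to the flag orbits of $\calP$: if $\Phi'=\gamma\Phi$ for $\gamma\in\Gamma(\calR)$, then $\gamma$ carries $F/F_{-1}$ isomorphically onto $(\gamma F)/F_{-1}$, and comparing the two identifications with $\calP$ shows that $\bar{\Phi'}$ differs from $\bar\Phi$ by an automorphism of $\calP$, hence lies in the same $\Gamma(\calP)$-orbit. (The same comparison shows the target orbit is independent of the chosen isomorphism $F/F_{-1}\cong\calP$.) Surjectivity is immediate, since any flag of $\calP$ can be lifted into any facet of $\calR$.

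The crux is then a counting argument showing that all fibers of $r$ have the same size. Here I would use that both $\Gamma(\calP)$ and $\Gamma(\calR)$ act \emph{freely} (semiregularly) on flags, so every $\Gamma(\calP)$-orbit has exactly $|\Gamma(\calP)|$ flags and every $\Gamma(\calR)$-orbit has exactly $|\Gamma(\calR)|$ flags. Fix a flag orbit $\mathcal{O}$ of $\calP$ and let $X_{\mathcal{O}}$ be the set of flags of $\calR$ whose restriction lies in $\mathcal{O}$; this is exactly the union of the orbits in $r^{-1}(\mathcal{O})$, so $\Gamma(\calR)$ preserves it. For each of the (fixed number of) facets $F$ of $\calR$, exactly $|\mathcal{O}|=|\Gamma(\calP)|$ of the flags containing $F$ restrict into $\mathcal{O}$, whence $|X_{\mathcal{O}}|=(\#\text{facets of }\calR)\cdot|\Gamma(\calP)|$, independent of $\mathcal{O}$. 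Dividing by $|\Gamma(\calR)|$ gives $|r^{-1}(\mathcal{O})|=(\#\text{facets})\cdot|\Gamma(\calP)|/|\Gamma(\calR)|$, a constant $d$. Since the $k_1$ fibers partition the $k_3$ orbits of $\calR$, we get $k_3=k_1 d$, so $k_1\mid k_3$.

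The main obstacle I anticipate is the well-definedness of $r$ — specifically, handling the two layers of choice (the representative flag within a $\Gamma(\calR)$-orbit, and the identification $F/F_{-1}\cong\calP$) and verifying that each only alters the restricted flag by an automorphism of $\calP$. Everything after that is bookkeeping driven by the freeness of the flag actions, which is precisely what forces all $\calP$-orbits, and hence all fibers of $r$, to have equal size.
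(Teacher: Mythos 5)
Your overall architecture matches the paper's: reduce to showing $k_1 \mid k_3$, obtain $k_2 \mid k_3$ by dualizing, and for the facet statement analyze how the flag orbits of $\calR$ lie over the flag orbits of the common facet. Your well-definedness analysis of the restriction map $r$ (the step you flag as the main obstacle) is correct. The genuine gap is in the counting step that establishes equinumerosity of the fibers of $r$: you compute $|r^{-1}(\mathcal{O})| = (\#\text{facets})\cdot|\Gamma(\calP)|\,/\,|\Gamma(\calR)|$ by dividing cardinalities of flag sets by $|\Gamma(\calR)|$. This is valid only when $\calR$ is finite. The proposition carries no finiteness hypothesis, and the paper applies it to infinite amalgamations (e.g.\ the nine-orbit tessellation of $\mathbb{E}^3$ by triangular prisms discussed right after the proof); for such $\calR$ both the number of facets and $|\Gamma(\calR)|$ are infinite, so the quotient is meaningless even though $k_3$ is finite.

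The fix is to localize the count, which is exactly what the paper does. Fix a facet $F$. The flags of $\calR$ containing $F$ that lie in a single $\Gamma(\calR)$-orbit form a single orbit under $\Stab_{\Gamma(\calR)}(F)$, since any automorphism carrying one such flag to another must fix $F$, the unique facet in both flags. Now $\Stab_{\Gamma(\calR)}(F)$ embeds in $\Gamma(F)\cong\Gamma(\calP)$ as a subgroup of some index $m_F$ (necessarily finite, because there are only finitely many flag orbits of $\calR$), and both groups act freely on the flags of $F$; hence each of the $k_1$ orbits of $\Gamma(F)$ splits into exactly $m_F$ orbits of $\Stab_{\Gamma(\calR)}(F)$. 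So the facet orbit of $F$ contributes exactly $m_F$ flag orbits of $\calR$ to each fiber of your map $r$, and summing over facet orbits shows every fiber has the common size $\sum m_F$ --- equivalently, the paper sums $m_F k_1$ over facet orbits to get $k_3$. With this local replacement your argument goes through in full generality; as written it proves the proposition only for finite $\calR$.
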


	\begin{proof}
	Let $F$ be a facet of $\calR$. We claim that the number of flag orbits of $\calR$ containing $F$ is a multiple of $k_1$. This follows from the fact that the set of flags containing $F$ in any given flag orbit of $\calR$ consists of the images of any of them under $\Stab_{\Gamma(\calR)}(F)$. The latter group must be isomorphic to a subgroup of $\Gamma(F)$ of some index $m$. This implies that there are $mk_1$ flag orbits of $\calR$ containing flags that contain $F$.

	If a flag orbit contains a flag $\Phi$ that contains a facet $F$ and a flag $\Psi$ that contains a facet $G$ then there must be an automorphism of $\calR$ mapping $F$ to $G$; in particular, there is an isomorphism from $F$ to $G$ mapping the restriction of $\Phi$ to the restriction of $\Psi$.
	It follows that if $F$ and $G$ are contained on some flags in any given flag orbit then the flag orbits of $\calR$ involved in $F$ are the same as those involved in $G$.

	Naturally, $\Gamma(\calR)$ induces a partition on the facets of $\calR$ where two facets are equivalent if and only if there is an automorphism mapping one on the other. The discussion above implies that the number $k_3$ of flag orbits is the sum over the parts of the partition of the number of flag orbits represented on each part; moreover, this is the same as the sum of the number of flag orbits represented on one facet on each part. Each term of this sum is a multiple of $k_1$. Hence $k_1$ divides $k_3$.

	A dual argument establishes that $k_2$ divides $k_3$.
	\end{proof}
	
	The lower bound on the number of orbits of an amalgamation in Proposition \ref{p:lowBoundOrbitAmalgam} can certainly be achieved. The simplest cases are when $\calR$ is regular (and hence its facets and vertex-figures are also regular) or when it is chiral with chiral facets and vertex-figures. 

	There is no upper bound in general on the number of flag orbits of amalgamations of two given polytopes. For example, the number of orbits of a polyhedron with square faces and square vertex-figures can be arbitrarily large (see \cite{Bottle}); this is also the case for polyhedra with cubical faces and octahedral vertex-figures (see \cite{Didicosm} and \cite{Helicosms}). Whenever the universal amalgamation of $\calP$ and $\calQ$ exists and is finite, then the number of flag orbits of the amalgamations of these two polytopes is bounded by the number of flags of the universal
	amalgamation, but not necessarily by the number of flag \emph{orbits} of the universal amalgamation. It is not known 
	whether this is the only case when a class of amalgamations has a bound on the number of flag orbits.

	\begin{problem}
	Classify all pairs of polytopes $\calP$ and $\calQ$ such that $\langle \calP, \calQ \rangle$ is non-empty and the number of flag orbits of polytopes in $\langle \calP, \calQ \rangle$ is bounded.
	\end{problem}

	Let us return now to the problem of extending a polytope $\calP$ without prescribing the vertex-figures.
	Some work has been done on extensions of regular and chiral polytopes, including
	regular extensions of regular polytopes (see \cite{pellicer-extension}), chiral extensions of regular polytopes
	(see \cite{high-rank-chiral}), and chiral extensions of chiral polytopes (see \cite{chiral-ext2, chiral-ext}).
	When $\calP$ is regular, then there is a \emph{universal extension} of $\calP$ that covers every other
	regular extension of $\calP$ \cite[Thm. 4D4]{arp}. Similarly, if $\calP$ is chiral with regular facets, 
	then there is a universal chiral extension of $\calP$ that covers all other chiral extensions of
	$\calP$ \cite{chiral-ext}. 
	
	\begin{problem}
	Which $k$-orbit polytopes $\calP$ have a universal $k$-orbit extension that covers all other $k$-orbit
	extensions of $\calP$?
	\end{problem}

	Rather that looking at extensions or amalgamations with a single type of facet, we can allow
	ourselves to use two different polytopes as facets. For example, in \cite{semiregular}, the
	authors arrange two regular polytopes $\calP$ and $\calQ$ in an alternating fashion to create
	vertex-transitive polytopes. Again, the construction is essentially algebraic, using the particular
	structure of the automorphism group of a regular polytope. Is there a useful generalization to
	$k$-orbit polytopes?
	
	\begin{problem}
	Given two polytopes $\calP$ and $\calQ$, when is there a polytope whose facets
	are each isomorphic to $\calP$ or $\calQ$? When is there a finite such polytope?
	\end{problem}







\section{Realizations}
The geometric origin of the idea of a polytope suggests that we study possible geometric descriptions of abstract polytopes. With this in mind, a {\em realization} of a polytope $\calP$ in a geometric space $S$ is defined in \cite{realizations-1} as a function from the vertex set of $\calP$ to some discrete subset of $S$, together with some instructions of how to interpret faces of higher ranks. Essentially, every face $F$ of rank $m+1$ is associated to the set of images of the $m$-faces contained in $F$.

Sometimes it is convenient to give further meaning to faces of rank 1 or greater. Whenever there is a unique well-defined line segment between any pair of points in $S$, then the edge between vertices $u$ and $v$ can be associated to the line segment between the points in $S$ associated to $u$ and $v$. This is the case in (real) Euclidean $d$-space $\mathbb{E}^d$ and (real) hyperbolic $d$-space $\mathbb{H}^d$. However, in (real) projective $d$-space $\mathbb{P}^d$ there are precisely two well-defined line segments between any pair of distinct points. In that case we may require the realization to also indicate the line segment associated to each edge (see \cite{ProjectivePoyhedraArocha}).

When convenient we may abuse notation and use $F$ to denote not just a face, but also its image
under the realization.

A realization is said to be {\em faithful} whenever for every $i$, the realization induces an injective mapping from the set of $i$-faces. Faithful realizations of abstract polytopes are often called {\em geometric polytopes}.

A \emph{symmetry} of a geometric polytope $\mathcal{K}$ is an isometry of $S$ that preserves $\mathcal{K}$. 
A realization $\mathcal{K}$ of an abstract polytope $\calP$ is said to be {\em symmetric} whenever, for every automorphism $\gamma$ of $\calP$, there is a symmetry of $\mathcal{K}$ that acts like $\gamma$ on the faces of $\mathcal{K}$. Clearly, $\Gamma(\calP)$ is isomorphic to the quotient of the symmetry group of any symmetric faithful realization of $\calP$ by the stabilizer of any of the flags. 

When convenient we shall abuse notation and denote by $\calP$ both the abstract polytope and its realization. Then the symmetry group is denoted by $G(\calP)$ to distinguish it from the automorphism group $\Gamma(\calP)$.

A geometric polytope $\calP$ is said to be {\em geometrically regular} if $G(\calP)$ acts transitively on the flags. If $\calP$ is geometrically regular then it is also combinatorially regular. A polytope is {\em combinatorially $k$-orbit} (resp. {\em combinatorially chiral}) whenever $G(\calP)$ induces $k$ orbits on the flags (resp. $2$ orbits on the flags with adjacent flags in distinct orbits). A geometrically $k$-orbit polytope must be a combinatorially $m$-orbit polytope for some $m$ dividing $k$.

The problems on realizations have been divided into two sections. In the first, we fix the space and explore the possible geometric polytopes that live there. In the second, we fix the polytope and ask where it can be realized and with what possible characteristics.

\subsection{Realizations in a given space}

When searching for all regular geometric $n$-polytopes ($n \ge 3$) in a geometric space, the first published complete list is that of the 48 regular polyhedra in $\mathbb{E}^3$. It consists of 18 finite polyhedra, 6 planar polyhedra (that are usually kept in this list rather than constituting a complete list of regular polyhedra in $\mathbb{E}^2$), 12 blended infinite polyhedra and 12 pure infinite polyhedra (see \cite{DressI}, \cite{DressII} and \cite{OrdinarySpace}).

Blended polyhedra are constructed from two orthogonal components, and depend on one real parameter that rescales one of the components while fixing the dimensions of the other. Pure polyhedra are the ones that are not blended. The 36 polyhedra that are not blended are unique up to similarity.

A lot of progress towards the classification of regular $n$-polytopes in $\mathbb{E}^d$ has been made in the last 20 years, mainly through the work of Peter McMullen. Most of the progress has centered on polytopes of
\emph{full rank} and \emph{nearly full rank}, which will be defined shortly.

A fair amount is also known about chiral geometric polytopes. There are no convex chiral polytopes and no chiral tessellations of Euclidean spaces (see for example \cite{DressEgon} and \cite{EuclideanSpaceForms}). The first chiral (geometric) polytopes were found only in 2005, when the complete classification of chiral polyhedra in $\mathbb{E}^3$ was given in \cite{ChiralRealSpaceI} and \cite{ChiralRealSpaceII}. They are all infinite and can be organized in six families, each of them depending on one parameter (up to similarity). Polyhedra in three of the families are combinatorially chiral and have finite faces; the parameter on these families takes only rational numbers. Polyhedra in each of the remaining three families are all combinatorially isomorphic to some regular polytope with infinite faces; the parameter runs over the real numbers and it can be interpreted as a chiral continuous motion of the infinite regular polyhedron.

There are no regular or chiral finite $(d+1)$-polytopes or infinite $(d+2)$-polytopes in $\mathbb{E}^d$ (see \cite[Theorems 5B20, 5C3]{arp} and \cite[Theorem 3.1, Proposition 11.1]{FullRank}). Hence finite regular or chiral $d$-polytopes and infinite $(d+1)$-polytopes in $\mathbb{E}^d$ are called {\em polytopes of full rank}.

Regular polytopes of full rank were classified in \cite[Theorem 11.2]{FullRank}, where it is also stated that there are no chiral polytopes of full rank. This claim turned out to be false; there are both finite chiral $4$-polytopes in $\mathbb{E}^4$ \cite{finite-chiral-4-poly} and infinite $4$-polytopes in $\mathbb{E}^3$ \cite{chiral-4-poly-r3}. It is natural now to ask for the classification of full rank chiral polytopes.

\begin{problem}\label{p:ChiralFullRank}
Describe all chiral polytopes of full rank.
\end{problem}

Regular and chiral finite $(d-1)$-polytopes and infinite $d$-polytopes in $\mathbb{E}^d$ are naturally called of {\em nearly full rank}. Regular polytopes of nearly full rank were classified in \cite{FourDimensionalFinite}, \cite{FourDinensionalInfinite} \cite{NearlyFullRank} and \cite{AddendumPeter}. The following is a more challenging problem than Problem \ref{p:ChiralFullRank}.

\begin{problem}
Describe all chiral polytopes of nearly full rank.
\end{problem}


If we set aside regularity and chirality, then there are finite $(d+1)$-polytopes and infinite $(d+2)$-polytopes that may be realized in $\mathbb{E}^d$. For example, consider the structure $\calQ$ consisting of the following three $3$-polyhedra. Face $F_1$ is the regular tessellation $\{4,4\}$ by squares of $\mathbb{E}^2$. Now color the squares in a checkerboard pattern, and let $F_2$ consist of the same vertices and edges as $F_1$, all of the black squares, and all linear apeirogons. That is, in addition to the black squares, $F_2$ has faces that are tessellations of the vertical and horizontal lines by edges. Let $F_3$ be the same as $F_2$ but with the white squares rather than the black ones. It is not hard to verify that $\calQ$ is a $4$-polytope in $\mathbb{E}^2$. Furthermore, $\calQ$ has $3$ flag orbits under $G(\calQ)$. This shows a symmetric faithful embedding of a three-orbit $4$-polytope in $\mathbb{E}^2$. The vertex-figures of $\calQ$ are combinatoially isomorphic to the hemicube (or equivalently, the Petrial of the tetrahedron), giving a planar three-orbit realization of a finite regular polyhedron.

\begin{problem}\label{p:max-rank}
For each symmetry type graph $T$, determine the maximum $n$ such that finite (or infinite) $n$-polytopes admit a faithful symmetric realization in $\mathbb{E}^d$ with symmetry type graph $T$.
\end{problem}

Given a symmetry type graph, we naturally define any $n$-polytope in $\mathbb{E}^d$ to be of full rank if there are no
$(n+1)$-polytopes in $\mathbb{E}^d$ with the same symmetry type graph. The following problem seems to be the
next natural step after Problem~\ref{p:ChiralFullRank}.


\begin{problem}
Describe all two-orbit polytopes of full rank.
\end{problem}

In \cite{Index2-I} and \cite{Index2-II} all two-orbit realizations in $\mathbb{E}^3$ of finite regular polyhedra were found. They are still highly symmetric interesting structures. When looking for $k$-orbit realizations of regular polytopes it might be a good idea to bound $k$ so that the resulting structures are all still highly symmetric and the classification remains tractable.

\begin{problem}
Find all $k$-orbit realizations in $\mathbb{E}^d$ of regular $d$-polytopes for $k \le d$.
\end{problem}

Convexity represents a serious restriction for highly symmetric structures. It is well-known by now that there are only $5$ regular convex polyhedra, $6$ regular convex $4$-polytopes and $3$ regular convex $d$-polytopes for each $d \ge 5$. Convex $k$-orbit polytopes are also very restricted as shown by the following theorem (\cite[Chapter VIII, Theorem 1]{kolya-thesis}).

\begin{theorem}
For any integer $k>1$ there is $N_k \in \mathbb{N}$ such that (geometrically) $k$-orbit convex polytopes exist only in fewer than $N_k$ dimensions.
\end{theorem}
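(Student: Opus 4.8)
The plan is to control the symmetry group $G = G(\calP)$ of a geometrically $k$-orbit convex $d$-polytope $\calP$ and to show that, once $d$ is large, having only $k$ flag orbits forces $\calP$ to be regular, contradicting $k>1$. First I would place $\calP$ with its centroid at the origin, so that $G$ is a finite subgroup of $O(d)$ acting freely on the set of flags; hence $|G|$ equals the number of flags divided by $k$, and the equivariant map sending a flag to its $i$-face shows that $\calP$ has at most $k$ orbits of $i$-faces for every rank $i$. In particular $\calP$ has at most $k$ orbits of facets and of vertices, and --- since the flags through a fixed facet $F$ are exactly the flags of $F$, permuted by $\Stab_G(F)$ --- every facet (and dually every vertex-figure) is itself a convex $(d-1)$-polytope with at most $k$ flag orbits. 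This already provides a useful inductive scaffold.

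The heart of the argument is the reflection subgroup $R \trianglelefteq G$ generated by the reflections lying in $G$. Being a finite Euclidean reflection group, $R$ is a finite Coxeter group and splits orthogonally as $R \cong R_1 \times \cdots \times R_t$ acting on irreducible components $V_1, \ldots, V_t$, together with a pointwise-fixed subspace $V_0$ of some dimension $f$, where $f + \sum_j \dim V_j = d$. The key geometric input I would establish is that both the number of factors $t$ and the fixed dimension $f$ are bounded in terms of $k$ alone: a direction in which $G$ has no reflection, and likewise each extra irreducible orthogonal factor, forces flag-adjacencies of $\calP$ to fall into genuinely new orbits, so that the flag-orbit count grows (roughly like a product over the pieces). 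Granting such a bound, for $d$ large at least one irreducible factor $R_j$ has rank tending to infinity, and by the classification of finite irreducible reflection groups it must be of type $A$, $B$, or $D$, since the exceptional and dihedral types occur only in bounded rank.

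Finally I would analyze a $k$-orbit convex polytope whose symmetry group contains such a large-rank Coxeter group. Because the vertices form at most $k$ orbits and $R$ acts with a simplicial fundamental chamber, $\calP$ is a bounded-orbit modification of a Wythoffian polytope built on the Coxeter diagram of $R_j$, which for types $A$, $B$, $D$ is essentially a path. Counting the flag orbits of such a Wythoffian polytope in terms of the ``ringed'' nodes of the diagram, I expect to show that this number grows with the rank unless the construction yields a regular polytope (the simplex, cube, cross-polytope, or demicube). Since $k$ is fixed, this caps the admissible rank, and hence the dimension, by some $N_k$; and the escape to a regular polytope is excluded precisely because $k>1$. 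I expect the main obstacle to be the middle step --- proving that few flag orbits really does bound the number of orthogonal reflection factors and the fixed dimension, and then making the Wythoffian flag-orbit count rigorous enough to force regularity in high rank --- since this is where the geometry of convexity (as opposed to mere abstract combinatorics) and the hypothesis $k>1$ both enter in an essential way.
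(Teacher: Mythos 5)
First, a point of reference: the paper does not prove this theorem at all --- it is quoted, with attribution, from \cite[Chapter VIII, Theorem 1]{kolya-thesis}, and the surrounding text only records $N_2=4$, $N_3=9$ and $k+2\le N_k\le 2^{k-3}\,9$. The shape of that bound ($N_k\le 2N_{k-1}$, roughly) already signals that the cited argument runs by induction on $k$ through facets and vertex figures: a flag orbit of $\calP$ meeting a facet $F$ restricts to a $\Stab_G(F)$-orbit of flags of $F$, so every facet and vertex figure is a convex polytope of one lower dimension with at most $k$ (and, when there are several facet or vertex orbits, strictly fewer) flag orbits; combined with the classification of two-orbit convex polytopes as a base case and the classical fact that a convex polytope whose facets and vertex figures are all regular is itself regular, this caps the dimension recursively. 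Your opening paragraph sets up exactly this scaffold and then abandons it for a reflection-group analysis.

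That analysis has two genuine gaps. The fatal one is the ``key geometric input'' of your middle step: you assert that the number of irreducible factors of the reflection subgroup $R\le G$ and the dimension $f$ of its fixed subspace are bounded by a function of $k$ alone, but you give no mechanism, and in the reflection-free case the assertion is the theorem itself in disguise. The symmetry group of a convex polytope can contain no reflections whatsoever --- the snub cube is a geometrically $10$-orbit convex $3$-polytope whose symmetry group is the rotation group of the cube, so $R$ is trivial and its fixed space is all of $\mathbb{E}^3$ --- hence bounding $f$ in terms of $k$ already requires bounding the dimension of a $k$-orbit convex polytope with rotation-only symmetry, which is not easier than the original problem. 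Second, even granting a large irreducible factor of type $A$, $B$ or $D$, the claim that $\calP$ is ``a bounded-orbit modification of a Wythoffian polytope'' does not follow: the vertex set is a union of at most $k$ orbits, but the face lattice is that of the convex hull of this union, which is not produced by the Wythoff construction and whose flag orbits are not counted by ringed nodes of a diagram. So the proposal is a plausible research programme rather than a proof; the two steps you yourself flag as the main obstacles are precisely where all the content lies, and the inductive facet/vertex-figure route of the cited source avoids both.
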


As metioned in \cite[Chapter VIII]{kolya-thesis}, $N_2 = 4$ and $N_3 = 9$; furthermore, $k+2 \le N_k \le 2^{k-3} 9$.

\begin{problem}\label{p:Convexnk}
Determine $N_k$ for every $k$ or improve the bounds for it.
\end{problem}

More generally, we have the following problems.

\begin{problem} \label{p:convex1}
For what values of $k$ and $d$ is there a convex $d$-polytope with $k$ combinatorial flag orbits?
\end{problem}

\begin{problem} \label{p:convex2}
For what values of $k$ and $d$ is there a convex $d$-polytope with $k$ geometric flag orbits?
\end{problem}
		
Strong restrictions also arise when realizing $k$-orbit $d$-polytopes as convex $d$-polytopes. Theorem 1 of \cite{combinatorially-two-orbit} states that if a two-orbit $d$-polytope $\calP$ is realized as a convex $d$-polytope, then $\calP$ must be isomorphic to a geometrically two-orbit convex polytope. The classification in \cite{two-orbit-convex} then implies that $\calP$ is either the cuboctahedron, the icosidodecahedron, or the dual of one of those two. Does a similar result hold for other $k$-orbit polytopes?

\begin{problem}\label{p:Convexkorbit}
For what values of $k$ and $d$ is it true that, if a $k$-orbit $d$-polytope $\calP$ is realized as a convex $d$-polytope, then $\calP$ must be isomorphic to a geometrically $k$-orbit convex polytope?
\end{problem}

Problems \ref{p:Convexnk} and \ref{p:Convexkorbit} are restricted to finite polytopes. One can rephrase them replacing ``convex polytopes'' by ``locally finite face-to-face tilings by convex tiles'' to formulate similar problems for infinite polytopes with finite faces. More information about this setting can be found in \cite{kolya-thesis}, \cite{two-orbit-convex} and \cite{combinatorially-two-orbit}.

	

Describing all geometrically vertex-transitive polyhedra with geometrically regular faces in $\mathbb{E}^3$ has already been a quite challenging problem that has not been solved even for the finite case (see \cite{CoxeterUniform} for a description of those with planar faces). Partial progress on the problem when the restriction of finiteness or of regular faces is dropped can be found in \cite{AbigailWilliamsThesis} and \cite{Undine}. These topics in higher ranks seem unexplored so far.



Moving away from Euclidean spaces the most natural spaces to explore are $\mathbb{P}^d$ and $\mathbb{H}^d$.

As mentioned in \cite{FourDimensionalFinite}, $d$-polytopes in $d$-projective space can be obtained from finite $d$-polytopes in $\mathbb{E}^{d+1}$ whose vertices lie on some sphere. Consequently, the classification of $k$-orbit $n$-polytopes in $\mathbb{P}^d$ can be obtained from the classification of finite $k$-orbit $n$-polytopes in $\mathbb{E}^{k+1}$ whose vertices live all on some sphere. Some results on realizations in $\mathbb{P}^3$ can be found in \cite{ProjectivePoyhedraArocha}, \cite{RoliProjectivePolyhedra}, \cite{finite-chiral-4-poly} and \cite{Realizations-toroids}.

Classifying regular polytopes in $\mathbb{H}^3$ seems much harder than classifying them in $\mathbb{E}^3$ (see \cite[Section 13]{FullRank} for an argument toward some difficulty in rank 4). Besides the work on the regular tilings by convex polyhedra (see for example \cite[Section 6J]{arp}) and the works \cite{CyrilGarner} and \cite{Hyperbolic-polyhedra}, to the authors' knowledge there has been no progress on this interesting problem. Classifying realizations of $k$-orbit polytopes in hyperbolic spaces seems prohibitively difficult at the moment. Perhaps the following problems would be tractable and would provide a good start to the theory of hyperbolic realizations of $k$-orbit polytopes.

\begin{problem}
Classify all two-orbit polyhedra in $\mathbb{H}^3$.
\end{problem}

\begin{problem}
Classify all two-orbit $4$-polytopes in $\mathbb{H}^3$.
\end{problem}

We suggest that readers eager to explore different spaces for realizations should consider quotients of $\mathbb{S}^d$, of $\mathbb{E}^d$ or of $\mathbb{H}^d$ (see \cite[Chapter 6]{arp}).

\subsection{Realizations of a given polytope}

Regular realizations of a given regular polytope in Euclidean spaces have been extensively studied. The space of Euclidean regular realizations of a given regular polytope has the structure of a convex cone. The realization cones of several regular polytopes have been described (see \cite{realizations-1}, \cite{realizations-3}, \cite{realizations-4}, \cite{realizations-2}).

Much less is known about other kind of realizations. Some ideas used in the study of regular realizations of a regular polytope may be adapted to the more general setting of symmetric realizations of a $k$-orbit polytope. The only examples so far of different symmetric realizations of non-regular polytopes can be found in \cite{Realizations-toroids}, where symmetric realizations of any toroid $\calP$ with type $\{4,4\}$ are given in terms of symmetric realizations of a regular toroid where $\calP$ can be embedded.

\begin{problem}\label{p:k-orb-real}
Develop a theory of $k$-orbit realizations of $k$-orbit polytopes.
\end{problem}

Here is a particularly appealing special case of Problem~\ref{p:k-orb-real}.

\begin{problem}
Which $k$-orbit $d$-polytopes have a faithful realization as a geometrically $k$-orbit polytope in $\mathbb{E}^d$?
\end{problem}

We should expect to need new techniques when studying non-symmetric realizations. In particular, some interesting behavior has appeared with two-orbit polyhedra that are combinatorially regular, namely that several of them admit a continuous movement while preserving the symmetry group (as an abstract group) at all times and the edge length of some edges (in some cases of all edges).
This is the case of all two-orbit face-to-face tessellations of Euclidean spaces by convex tiles that are combinatorially regular \cite{two-orbit-convex}, the infinite families of regular polyhedra of index $2$ \cite{Index2-I}, and all chiral realizations of regular polyhedra in $\mathbb{E}^3$ \cite{CombStructureSchulChiral}. In each case the movement has a different nature as that of the blended polyhedra in Euclidean space. However, the regular polyhedra of index $2$ in \cite{Index2-II} admit no such movement.

This may only be one of many different aspects of the study of non-symmetric realizations with respect to that of symmetric realizations.

\begin{problem}\label{p:kmrealizations}
Develop a theory of $k$-orbit realizations of $m$-orbit polytopes.
\end{problem}

A first step in the direction of Problem \ref{p:kmrealizations} is to restrict to the case $m=1$.

\begin{problem}
Develop a theory of $k$-orbit realizations of regular polytopes.
\end{problem}

Euclidean spaces seem the most natural spaces to choose when studying realizations, in part due to the decomposition of its group of isometries as a semidirect product of the translation subgroup and the subgroup of linear isometries.

Hyperbolic spaces admit several symmetric realizations of regular polytopes that cannot be realized in the Euclidean space of the same dimension, like the regular tessellations $\{p,q\}$ with $1/p + 1/q < 1/2$ of $\mathbb{H}^2$ \cite[Chapter 6J]{arp}. 
However, to the authors' knowledge there is no systematic work on the study of all hyperbolic realizations of a given polytope.

\begin{problem}
Develop a theory of realizations of regular polytopes in hyperbolic spaces.
\end{problem}

\section{Acknowledgements}
The second author was supported by PAPIIT-UNAM under project grant IN101615.

\bibliographystyle{amsplain}
\bibliography{gabe}

\end{document}